\newcommand{\EMD}{\text{EMD}}
\newcommand{\GL}{\text{GL}}
\newcommand{\bmu}{\bm{\mu}}
\newtheorem{theorem}{Theorem}
\newtheorem{corollary}[theorem]{Corollary}
\newtheorem{lemma}[theorem]{Lemma}
\newtheorem{prop}[theorem]{Proposition}
\theoremstyle{definition}
\newtheorem*{definition}{Definition}
\newtheorem*{example}{Example}
\newtheorem*{remark}{Remark}
\begin{document}

\title[A generalization for the expected value of EMD]{A generalization for the expected value of the earth mover's distance}

\author{William Q. Erickson}
\address{3200 North Cramer Street, Milwaukee, WI 53211}
\email{wqe@uwm.edu}

\subjclass[2010]{Primary 13P25, 05E14; Secondary 05E40}

\dedicatory{}

\begin{abstract}

The earth mover's distance (EMD), also called the first Wasserstein distance, can be naturally extended to compare arbitrarily many probability distributions, rather than only two, on the set $[n]=\{1,\dots,n\}$.  We present the details for this generalization, along with a highly efficient algorithm inspired by combinatorics; it turns out that in the special case of three distributions, the EMD is half the sum of the pairwise EMD's.  Extending the methods of Bourn and Willenbring (2020), we compute the expected value of this generalized EMD on random $d$-tuples of distributions, using a generating function which coincides with the Hilbert series of the Segre embedding. We then use the EMD to analyze a real-world data set of grade distributions.
\end{abstract}

\maketitle

\section{Introduction}
\label{s:intro}

We generalize a result appearing in \cite{bw}, in which the authors compute the expected value of the earth mover's distance (EMD) between two probability distributions, by means of a generating function. The EMD can be viewed as the solution to a problem in transport theory, first considered in \cite{monge} by French geometer Gaspard Monge in 1781.  (Although the term ``earth mover" seems to have been coined only in the 1990s, it is pointed out in Villani's monumental reference \cite{villani} that the title of Monge's original treatise translates, more or less, as ``On the theory of material extracted from the earth and input to a new construction."  Monge, then, truly was the original earth mover.) Nearly 200 years later, in \cite{hoffman}, Monge's name was given to a critical property of certain cost arrays for which his problem can be solved by a greedy algorithm. Throughout the late 1980s and 1990s, in \cite{a&p} and \cite{bein}, this Monge property was generalized to higher-dimensional arrays.  (See also \cite{kline} for a more recent treatment.) It is an essential fact in this paper (whose proof is reserved for Section \ref{s:mongeproof} at the end) that the $d$-dimensional cost array associated with our $\EMD_d$ has this Monge property. 

Section \ref{s:EMD2}, written for those readers unfamiliar with the EMD, presents a simple example and points out all the relevant details which will reappear in our generalization.

We begin in Section \ref{s:EMDd} by defining an earth mover's ``distance" $\EMD_d$ between $d$ distributions; the classical EMD treated in \cite{bw} coincides with $\EMD_2$.  We actually find that on three distributions, $\EMD_3$ equals half the sum of the the three $\EMD_2$ values, although no such relationship holds for $d>3$.  

In Section \ref{s:discreet}, en route to constructing a generating function, we define a discrete version of $\EMD_d$ which compares histograms instead of probability distributions, and we describe an efficient computational method using a generalization of the RSK correspondence from combinatorics.  

In Section \ref{s:expval}, we encode the values of the discrete $\EMD_d$ in a generating function, which we manipulate in order to extract the expected value.  Translating this discrete result back into the continuous setting, we prove the main theorem of this paper (Theorem \ref{expval}), which is a recursive formula to compute the expected value of $\EMD_d$.  We then apply our theory in Section \ref{s:realworld} to analyze a real-world data set of grade distributions. 

Finally, in Section \ref{s:repthy}, we mention a connection between our generating function and the Segre embedding in algebraic geometry.  We also exhibit a certain infinite-dimensional representation of the Lie algebra $\mathfrak{su}(p,q)$, whose action corresponds to manipulating the distributions compared by our EMD.

Since the appearance of \cite{bw}, the problem of finding the expected value of $\EMD_2$ has been solved from an analytical approach in \cite{volkmer}. The setup has also been specialized in \cite{kretschmann} to a data set of distributions with a fixed average value.

We believe that the result in this paper --- a method to evaluate the ``closeness" of arbitrarily many distributions --- has great potential as a tool in data analysis.  In evaluating teaching and assessment practices at the university level, for instance, we can now assign a single value to an entire course by evaluating the EMD between the individual sections, and then track the behavior of that course's EMD for different groups of instructors, different course coordinators, fall vs.\ spring semesters, and other variables.  We can even assign EMD values to individual exams and other assessments using the grade distributions in various sections; or in the other direction, we can compare different courses to each other, both within and outside a given department.  In all of these settings, we believe that the generalized EMD can contribute to an interesting cluster analysis of the kind proposed in \cite{bw}.

\textbf{Acknowledgements:} The author would like to thank Rebecca Bourn and Jeb Willenbring, the authors of \cite{bw}, for the conversations about their original paper.  Jeb's observations about the connections to representation theory were especially vital to Section \ref{s:repthy}.

\section{EMD between 2 distributions: summary and an example}
\label{s:EMD2}

For readers unfamiliar with the classical EMD, we summarize the idea here.  Consider two probability distributions on the finite set of integers $[n]=\{1,\dots,n\}$.  (More vividly, in place of a ``probability distribution," imagine $n$ bins of earth whose combined mass is one unit, located at $1,\dots,n$ on the number line.) Intuitively, the EMD between the two distributions measures the ``cheapest" cost of moving earth between the bins so as to equalize the distributions, where the ``cost" of moving one unit of earth is the distance of the move.  For example, the cost of moving $0.25$ units of earth from bin $2$ to bin $5$ is $0.25\cdot (5-2)=0.75$.  To make this precise, we define the \textbf{cost function} $C:[n]\times [n]\longrightarrow \mathbb Z_{\geq 0}$, where $C(i,j)$ is the cost of moving one unit of earth from bin $i$ to bin $j$.  In this case, clearly $C(i,j)=|i-j|$.  

Any solution which equalizes the two distributions --- whether or not it is the optimal solution --- can be encoded in an $n\times n$ matrix $J$.  Necessarily, the row sums of $J$ will correspond to the first distribution, and the column sums to the second, so the entries of $J$ must sum to $1$.  

We present a brief example to show how the entries of $J$ give (possibly ambiguous, but equivalent) step-by-step instructions to equalize the two distributions.  The procedure we give here is not the most direct (see Section 2 of \cite{bw}), but it will provide the best intuition when we generalize to $d$ distributions in the next section. The less-than-rigorous descriptions below will be formalized in the next section in terms of the taxicab metric.

\begin{example}

Consider the two distributions $\mu_1=(0.3,\: 0.3, \:0.4)$ and $\mu_2=(0.1, \: 0, \: 0.9)$.  Hence $n=3$.  Then one matrix (among infinitely many) with the prescribed row and column sums is
\begin{equation*}
    J=\begin{bmatrix}
    .1 & 0 & .2\\
    \phantom{.}0 & 0 & .3\\
    \phantom{.}0 & 0 & .4
    \end{bmatrix}_{\textstyle .}
\end{equation*}
The nonzero entries of $J$ correspond to moving earth as follows:
\begin{itemize}
    \item $J_{1,1}=0.1$.  Note that the coordinates $(1,1)$ are already equal to each other, so we do not have to move the $0.1$ units of earth at all.
    \item $J_{1,3}=0.2.$  Now the coordinates $(1,3)$ are not equal; in order to make them equal with as little cost as possible, we have three valid options, all of which have cost 2:
    \begin{itemize}
        \item In the first coordinate, we could add $2$ to make the change $1 \rightarrow 3$.  This corresponds to moving the $0.2$ units of earth in $\mu_1$, from bin 1 to bin 3.
        \item In the second coordinate, we could subtract $2$ to make the change $3\rightarrow 1$.  This corresponds to moving the $0.2$ units of earth in $\mu_2$, from bin 3 to bin 1.
        \item We could add $1$ to the first coordinate $(1\rightarrow 2)$ and subtract 1 from the second coordinate $(3 \rightarrow 2)$.  This corresponds to moving $0.2$ units of earth in $\mu_1$ from bin 1 to bin 2, and then moving $0.2$ units of earth in $\mu_2$ from bin 3 to bin 2.
    \end{itemize}
    \item $J_{2,3}=0.3$.  The cheapest ways to equalize the coordinates $(2,3)$ are the following two options, each with cost 1:
    \begin{itemize}
        \item In the first coordinate, we could add $1$ to make the change $2\rightarrow 3$.  This corresponds to moving the $0.3$ units of earth in $\mu_1$, from bin 2 to bin 3.
        \item In the second coordinate, we could subtract $1$ to make the change $3\rightarrow 2$.  This corresponds to moving the $0.3$ units of earth in $\mu_2$, from bin 3 to bin 2.
    \end{itemize}
    \item $J_{3,3}=0.4$.  Since the coordinates $(3,3)$ are already equal, we do not have to move the $0.4$ units of earth at all.
\end{itemize}
\end{example}

Now, depending upon which of the above options we choose at each step, this process can result in any of six distinct pairs of final distributions $\mu'_1$ and $\mu'_2$.  But within each possible pair, as the reader can check, we always finish with $\mu'_1=\mu'_2$, as desired.  Furthermore, the total cost of all the earth moved is independent of our choices, since all options above minimized the cost at each step.  (Also note that the cost at each step was always equal to $|i-j|$, coinciding with the cost function $C$ we defined earlier.) In this case, the total cost of the earth moved was 
\begin{equation*}
    0.1(0)+0.2(2)+0.3(1)+0.4(0)=\mathbf{0.7}.
\end{equation*}  

The $\EMD$ between $\mu_1$ and $\mu_2$ is, by definition, the infimum (actually the minimum) of the set of total costs, taken over all possible matrices $J$ with the prescribed row and column sums.  In this example, although not obvious at first glance, $0.7$ is in fact the least possible cost, and so $\EMD(\mu_1,
\mu_2)=0.7$.  This turns out to be a consequence of the fact that the support of $J$ lies in a \textbf{chain}: in other words, if we put the product order $\preceq$ on $[n]\times[n]$, we see that
\begin{equation*}
(1,1) \preceq (1,3) \preceq (2,3) \preceq (3,3);
\end{equation*} 
this pairwise comparability is what we mean by a chain in $[n]\times [n]$.  This fact --- that support in a chain implies minimality --- is equivalent, on a deeper level (see \cite{hoffman}), to the fact that our cost function $C$, if considered as an $n\times n$ array, has the ``Monge property" alluded to in the introduction; in this case, the greedy algorithm to solve the earth mover's problem (known as the ``northwest corner rule"; see \cite{bein})  eliminates one row or column at each step, meaning the support of the solution matrix $J$ is always a chain.  

There is one phenomenon here in the $d=2$ case which will \emph{not} generalize to $d>2$: in the above example, we could have removed any ambiguity by deciding that we would move earth within $\mu_1$ exclusively, so that both final distributions would equal $\mu_2$.  Therefore, we could interpret the problem as finding the cheapest way to transport material from a ``source" or ``supply vector" ($\mu_1$) to a ``sink" or ``demand vector" ($\mu_2$).  For $d>2$, however, the optimal solution at each step may require moving earth in any or all of the distributions, and so we lose the binary supply-demand interpretation of the problem.  

Having presented the big picture, without details, in the $d=2$ case, we now proceed to build up the general case for arbitrary $d$.  Throughout the next section, the reader can verify that the definitions and results coincide with those found in this simple example where $d=2$.

\section{Extending EMD to $d$ distributions}
\label{s:EMDd}

\subsection{Definitions and notation}

Let $\mathcal P_n$ denote the set of probability distributions on $[n]$. Assume the uniform probability measure on the $d$-fold product $\mathcal P_n \times \cdots \times \mathcal P_n$, defined by its embedding into $\mathbb R^{dn}$.  Our goal is to compare an arbitrary number of elements of $\mathcal P_n$, written as the $d$-tuple $\bmu:=(\mu_1,\dots,\mu_d)$.  We should keep in mind that each $\mu_i$ is itself an $n$-tuple whose components sum to 1.  Throughout this paper, we write the sum of a vector's components using absolute value bars, so in this case, $|\mu_i|=1$.  We will denote the $k^\text{th}$ component of $\mu_i$ by $\mu_i(k)$, which is just the value of the distribution $\mu_i$ at $k\in [n]$.  To each $\bmu$ there corresponds the set $\mathcal J_{\bmu}$ of joint distribution arrays, defined as follows.  

For an array $J$, we will write $J(m_1,\dots,m_d)$ for the entry at position $(m_1,\dots,m_d)$.  Now, we define $\mathcal J_{\bmu}$ as the set containing all those arrays $J\in \mathbb R_{\geq 0}^{n\times \cdots \times n}$ whose sums within the coordinate hyperplanes coincide with $\bmu$.  Specifically, fixing $m_i=k$, we must have
\begin{equation}
\label{sums}
    \sum_{m_1,\dots,\widehat m_i,\dots,m_d=1}^n J(m_1,\dots,\underbrace{k}_{m_i},\dots,m_d)=\mu_i(k).
\end{equation}
In other words, summing all the entries whose positions in the array have $k$ as their $i^\text{th}$ coordinate, we obtain the $k^\text{th}$ component of $\mu_i$.  In the familiar $d=2$ case, $i=1$ gives us the row sums, and $i=2$ the column sums.  For $d=3$, see Figure \ref{fig:cube} for an illustration.

\begin{figure}[h]
 
\begin{subfigure}[t]{0.32\textwidth}
\includegraphics[width=0.9\linewidth]{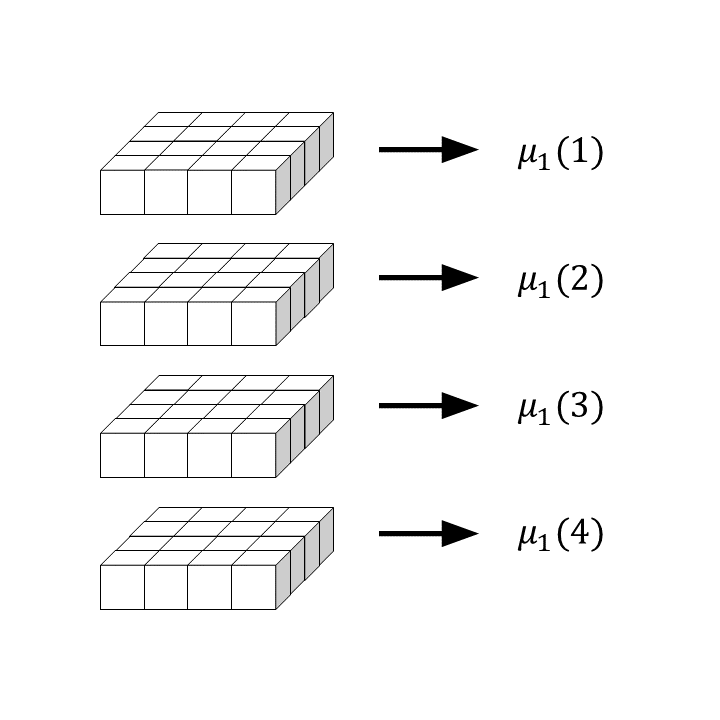} 
\end{subfigure}
\hfill
\begin{subfigure}[t]{0.32\textwidth}
\includegraphics[width=0.9\linewidth]{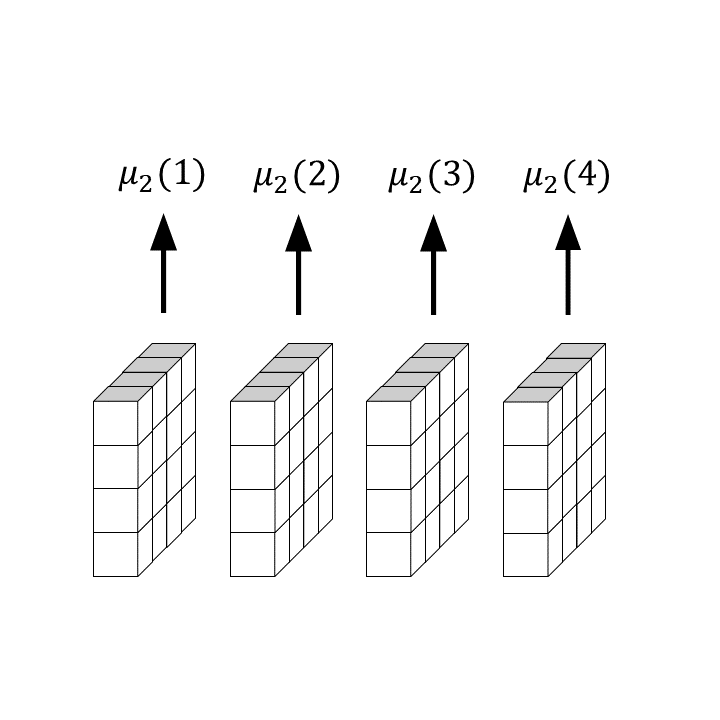}
\end{subfigure}
\hfill
\begin{subfigure}[t]{0.32\textwidth}
\includegraphics[width=0.9\linewidth]{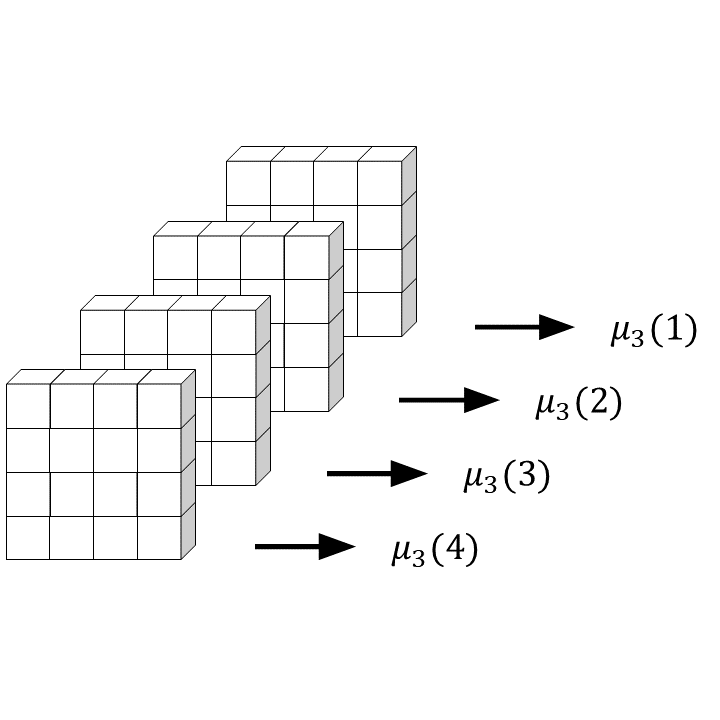}
\end{subfigure}
 
\caption{An illustration of the conditions in equation \eqref{sums}, in the case where $d=3$ and $n=4$. Given some $\bmu=(\mu_1,\mu_2,\mu_3)$, every array in $\mathcal J_{\bmu}$ satisfies the above relations, where each arrow represents the sum of the entries in the designated plane.}
\label{fig:cube}
\end{figure}

Any array $J\in \mathcal J_{\bmu}$ can be thought of as a solution to the earth mover's problem for $n$ bins, determined by the distributions in $\bmu$.  This means we need a $d$-dimensional analog of the ``cost" function from Section 2, and the natural candidate arises from the taxicab metric on $[n]^d$.  Specifically, for each position in a $d$-dimensional array, we want the associated cost equal the taxicab distance to the main diagonal, i.e., to the nearest position in the array whose coordinates are all equal.  (This ``equality of coordinates" property of the main diagonal, as we recall from Section 2, corresponded to zero earth being moved.)  Roughly speaking, this cost is the fewest number of $\pm1$'s we need to add in order to equalize all the coordinates.  For example, the most efficient way to equalize the coordinates of the position $(5,4,5,5,5,7,5)$ is to add $1$ to the $4$, and then to subtract $2$ from the $7$, for a total cost of $3$.  This is precisely the taxicab distance to the main diagonal, specifically to the position $(5,5,5,5,5,5,5)$.  Just as in the example from the previous section, this distance-finding exercise corresponds to moving earth:
\begin{itemize}
    \item When we added $1$ to the $2^\text{nd}$ coordinate to make the change $4\rightarrow 5$, we moved a unit of earth in the $2^\text{nd}$ distibution $\mu_2$ from bin $4$ to bin $5$.
    \item When we subtracted $2$ from the $6^\text{th}$ coordinate to make the change $7\rightarrow 5$, we moved a unit of earth in the $6^\text{th}$ distibution $\mu_6$ from bin $7$ to bin $5$.
\end{itemize}

\begin{example} 
Consider the three distributions
\begin{align*}
    \mu_1&=(0.5,\:0.1,\:0.4),\\
    \mu_2&=(0.5,\:0.2,\:0.3),\\
    \mu_3&=(0.7,\:0.2,\:0.1).
\end{align*}
Then one array in $\mathcal J_{\bmu}$ is, for instance,
\begin{equation}
\label{arrayexample}
    J=\begin{bmatrix}
    [.5,0,0]&[\phantom{.}0,0,0]&[0,\phantom{.}0,\phantom{.}0]\\
    [\phantom{.}0,0,0] & [.1,0,0] & [0,\phantom{.}0,\phantom{.}0]\\
    [\phantom{.}0,0,0] & [.1,0,0] & [0,.2,.1]
    \end{bmatrix}_{\textstyle ,}
\end{equation}
flattened so that the first coordinate specifies the row, the second coordinate specifies one of the three main columns, and the third coordinate specifies the position inside the triple at that position.  The nonzero entries are
\begin{align*}
    J(1,1,1)&=0.5\\
    J(2,2,1)&=0.1\\
    J(3,2,1)&=0.1\\
    J(3,3,2)&=0.2\\
    J(3,3,3)&=0.1.
\end{align*}
This information tells us how to arrive at the solution corresponding to $J$:
\begin{itemize}
    \item The cost of $(1,1,1)$ is $0$ since it is already on the main diagonal, so we do not move the $0.5$ at all. 
    \item The cost of $(2,2,1)$ is $1$, since in the $3^\text{rd}$ coordinate we must make the change $1\rightarrow 2$.  This means that in the $3^\text{rd}$ distribution $\mu_3$, we move $0.1$ from bin 1 to bin 2.  Currently $\mu'_3=(0.6,\:0.3,\:0.1)$.
    \item The cost of $(3,2,1)$ is $2$, since we equalize the coordinates most efficiently by subtracting $1$ from the $1^\text{st}$ coordinate $(3\rightarrow 2)$ and adding $1$ to the $3^\text{rd}$ coordinate $(1\rightarrow 2)$.  Hence, we move $0.1$ from bin 3 to bin 2 in $\mu_1$, and from bin 1 to bin 2 in $\mu_3$.  Now $\mu'_1=(0.5,\:0.2,\:0.3)$ and $\mu'_3=(0.5,\:0.4,\:0.1)$.
    \item The cost of $(3,3,2)$ is $1$, by adding 1 to the $3^\text{rd}$ coordinate.  This corresponds to moving $0.2$ from bin 2 to bin 3 in $\mu_3$.  Now $\mu'_3 = (0.5,\:0.2,\:0.3)$.
    \item The cost of $(3,3,3)$ is $0$, so we do not move the $0.1$ anywhere.
\end{itemize}
Note that our final result is that all three distributions are the same, as desired: $\mu'_1=\mu'_2=\mu'_3=(0.5, \: 0.2, \: 0.3)$.  Also note that we rigged this example, unlike that in Section 2, so that none of the steps would present more than one optimal option, although in general there certainly might exist several different solutions for the same array $J$.  But of course in each case the total cost is the same. 

The natural computation now is to find that total cost, by multiplying the amount of earth moved at each step by the number of bins it was moved; in other words, multiply each entry in $J$ by the cost of its position, then add these products together:
\begin{equation*}
    0.5(0)+0.1(1)+0.1(2)+0.2(1)+0.1(0)=\mathbf{0.5}.
\end{equation*} This completes the example.
\end{example}  

Of course, there is no guarantee that this is the least costly way to equalize the three distributions; this is simply the solution corresponding to one particular array $J$, and a different array in $\mathcal J_{\bmu}$ might give a different total cost.  When we finally define our generalized EMD, it will be defined as the \emph{least} possible cost for any $J \in \mathcal J_{\bmu}$.  First, however, we should record a formula for the cost of an array position, to improve upon the somewhat sloppy method by inspection we have used so far.

The formula for the $d$-dimensional taxicab distance from a point to a line is derived in \cite{taxicab}.  In our case, the line of interest is the main diagonal, which passes through $(1,\dots,1)$ in the direction $\langle 1,\dots,1\rangle$.  This distance, and therefore our cost function $C$, turns out to be 
\begin{equation}
\label{cost}
    C(m_1,\dots,m_d)=\min_{i \in [d]} \left\{ \sum_{j\neq i}|m_i-m_j|\right \}.
\end{equation}
This cost function $C$ can also naturally be thought of as an $n \times \cdots \times n$ array, so we will occasionally refer to the ``cost array" in this paper. 

There is also a more direct way to compute $C$, which will be convenient later.  Let $\mathbf m := (m_1,\dots,m_d)$, and let $\widetilde{\mathbf m}$ denote the vector whose components are those of $\mathbf m$ rearranged in ascending order; e.g., if $\mathbf m = (7,4,5,3,1)$, then $\widetilde{\mathbf m}=(1,3,4,5,7)$.

\begin{prop} 
\label{costalt}
Equation \eqref{cost} can be computed as     $\displaystyle C(\mathbf m)=\sum_{i=1}^{\lfloor d/2 \rfloor}
    \widetilde m_{d-i+1}-\widetilde m_i.$
\end{prop}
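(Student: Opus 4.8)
The plan is to recognize the right-hand side of \eqref{cost} as a one‑dimensional median problem: the inner sum $\sum_{j\neq i}|m_i-m_j|$ is just $\sum_{j=1}^d|m_j-t|$ evaluated at $t=m_i$ (the $j=i$ term vanishes), so $C(\mathbf m)=\min_{i\in[d]}\sum_{j=1}^d|m_j-m_i|$. I would first note that this gives the easy inequality $C(\mathbf m)\ge\min_{t\in\mathbb R}\sum_{j=1}^d|m_j-t|=:\min_t f(t)$, since each $m_i$ is an admissible value of $t$; the work is to show that $\min_t f(t)$ is attained at one of the $m_i$ (yielding the reverse inequality) and equals the claimed sum.

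For the key step I would set $t^\star:=\widetilde m_{\lceil d/2\rceil}$, which is one of the coordinates of $\mathbf m$, and partition the index set $\{1,\dots,d\}$ into the pairs $\{i,\,d-i+1\}$ for $i=1,\dots,\lfloor d/2\rfloor$, plus the single leftover index $\lceil d/2\rceil$ when $d$ is odd. For each pair the triangle inequality gives
\begin{equation*}
|\widetilde m_i-t|+|\widetilde m_{d-i+1}-t|\;\ge\;\widetilde m_{d-i+1}-\widetilde m_i ,
\end{equation*}
with equality exactly when $\widetilde m_i\le t\le\widetilde m_{d-i+1}$, and the leftover term (if present) is $\ge 0$; summing over all pairs shows $f(t)\ge\sum_{i=1}^{\lfloor d/2\rfloor}(\widetilde m_{d-i+1}-\widetilde m_i)$ for every $t\in\mathbb R$. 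I would then verify the elementary index inequalities $i\le\lceil d/2\rceil\le d-i+1$ for all $i\le\lfloor d/2\rfloor$ (using $d-\lceil d/2\rceil=\lfloor d/2\rfloor$), which say precisely that $t^\star$ lies in every interval $[\widetilde m_i,\widetilde m_{d-i+1}]$ and, when $d$ is odd, equals the unpaired coordinate $\widetilde m_{\lceil d/2\rceil}$ so that the leftover term vanishes. Hence $f(t^\star)=\sum_{i=1}^{\lfloor d/2\rfloor}(\widetilde m_{d-i+1}-\widetilde m_i)$ is the global minimum of $f$, and since $t^\star$ is some $m_i$ it also equals $\min_{i\in[d]}\sum_j|m_j-m_i|=C(\mathbf m)$, completing the proof.

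I do not expect a substantive obstacle; the only delicate points are the bookkeeping that separates the even and odd cases — confirming that the single choice $t^\star$ simultaneously meets all the pairing intervals and, in the odd case, coincides with the unpaired middle coordinate — and making explicit that a minimizer of the real relaxation can always be taken among the $m_i$, so that the discrete minimum defining $C(\mathbf m)$ is not strictly larger than $\min_t f(t)$.
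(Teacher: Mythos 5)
Your proof is correct, but it proceeds differently from the paper's. You relax the discrete minimum in \eqref{cost} to the continuous problem $\min_{t\in\mathbb R}\sum_j|m_j-t|$ and run the classical ``median minimizes the sum of absolute deviations'' argument: pair off $\widetilde m_i$ with $\widetilde m_{d-i+1}$, apply the triangle inequality to each pair to get the lower bound $\sum_{i=1}^{\lfloor d/2\rfloor}(\widetilde m_{d-i+1}-\widetilde m_i)$ for every $t$, and check that $t^\star=\widetilde m_{\lceil d/2\rceil}$ (which is one of the coordinates, so admissible in the discrete minimum) attains it. The paper instead expands $\sum_{j\neq i}|m_i-m_j|$ explicitly as a weighted sum of consecutive gaps of the sorted vector $\widetilde{\mathbf m}$, asserts that this expression is minimized at the median index $i=\lfloor\frac{d+1}{2}\rfloor$, and then telescopes the resulting sum, handling the even and odd cases separately. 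Your route has two advantages: it gives a self-contained justification of why the median index is optimal (a point the paper states with little argument), and it shows the formula is in fact the minimum over all real $t$, not merely over the coordinates $m_i$, with the pairing inequality making the even/odd bookkeeping almost automatic (only the unpaired middle term, which vanishes at $t^\star$, distinguishes the odd case). What the paper's computation buys is explicitness: it exhibits the value of $\sum_{j\neq i}|m_i-m_j|$ for \emph{every} index $i$, not just the optimal one, which makes the dependence on $i$ visible. Your one delicate point --- that the discrete minimum is not strictly larger than the continuous one --- is handled correctly by noting $t^\star$ is itself some $m_i$, so no gap remains.
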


As an example before the proof, take $\mathbf m = (7,4,5,3,1)$ as above.  Then by the proposition, to compute $C(\mathbf m)$, we instead look at $\widetilde{\mathbf m}$ and sum up the pairwise differences working outside-in:
\begin{align*}
\widetilde{\mathbf m}&=(\overbrace{1,\overbrace{3,4,5}^{5-3=\mathbf{2}},7}^{7-1=\mathbf{6}}),\\
\text{therefore }C(\mathbf m)&=6+2\\
&=8.
\end{align*}

\begin{proof}
For fixed $i\in[d]$, we have 
\begin{align*}
\sum_{j\neq i}|m_i-m_j|&=(\widetilde m_2 - \widetilde m_1) + 2(\widetilde m_3 - \widetilde m_2) + 3(\widetilde m_4-\widetilde m_3)+\cdots +(i-1)(\widetilde m_i - \widetilde m_{i-1})\\
&+(\widetilde m_d - \widetilde m_{d-1})+2(\widetilde m_{d-1}-\widetilde m_{d-2})+3(\widetilde m_{d-2}-\widetilde m_{d-3})+\cdots +(d-i)(\widetilde m_{i+1}-\widetilde m_i),
\end{align*}
which is minimized when $i=\lfloor \frac{d+1}{2} \rfloor$.  Making this evaluation in the displayed sum, we find that the sum telescopes; when $d$ is even, we obtain
$$
-\widetilde m_1-\widetilde m_2 - \cdots - \widetilde m_{\lfloor \frac{d+1}{2} \rfloor} + \widetilde m_{\lfloor \frac{d+1}{2} \rfloor+1} + \dots + \widetilde m_d,
$$
and when $d$ is odd, we obtain
$$
-\widetilde m_1-\widetilde m_2 - \cdots - \widetilde m_{\lfloor \frac{d+1}{2} \rfloor-1} + \widetilde m_{\lfloor \frac{d+1}{2} \rfloor+1}+ \dots + \widetilde m_d.
$$ 
In either case, this simplifies as 
$$
C(\mathbf m)=\sum_{i=1}^{\lfloor d/2 \rfloor}
    \widetilde m_{d-i+1}-\widetilde m_i.
$$
\end{proof}

\begin{remark} 
A recent paper \cite{kline} defines a different cost function than ours for the earth mover's problem, namely $C'(\mathbf m):=\max\{m_i\}-\min\{m_i\}$.  We can see from Proposition \ref{costalt} that $C'$ agrees with our $C$ for $d=2$ and $d=3$, but not for $d>3$.  For example, letting $\mathbf m=(1,1,2,2)$, we have $C(\mathbf m)=2$ but $C'(\mathbf m)=1$.  For our purposes, we have chosen our $C$ because it counts \emph{every} earth-movement required to equalize the distributions.  For example, keeping $\mathbf m=(1,1,2,2)$, consider the distributions $\mu_1=\mu_2=(1,0)$ and $\mu_3=\mu_4=(0,1)$.  Then one solution is given by the array whose only nonzero entry is a $1$ at position $\mathbf m$.  Intuitively, we want the EMD of these four distributions to be $2$, not $1$, since we must first move a unit of earth by $1$ bin, and then move $another$ unit by 1 bin.
\end{remark}

Having built up the necessary intuition and formulas, we are finally ready to make our main definition:

\begin{definition}
Let $\bmu$ be a $d$-tuple of probability distributions, as above. Then the \textbf{generalized earth mover's distance} is defined as
\begin{equation}
\label{emddef}
    \EMD_d(\bmu):=\min_{J\in \mathcal J_{\bmu}}\sum_{\mathbf m \in [n]^d}
    C(\mathbf m)J(\mathbf m).
\end{equation} 
\end{definition}

\subsection{Existence of a greedy algorithm}

As mentioned in the first two sections, finding the right-hand side of \eqref{emddef} is equivalent to finding the optimal solution to a $d$-dimensional transport problem.  It is shown in \cite{bein} that there exists a greedy algorithm to find this solution in $O(d^2n)$ time, precisely when the cost array $C$ has the \textbf{Monge property} mentioned in the introduction:

\begin{definition}
A $d$-dimensional array $A$ has the \textbf{Monge property} if, for all $\mathbf x = (x_1,\dots,x_d)$ and $\mathbf y = (y_1,\dots,y_d)$, we have
$$
A(\min\{x_1,y_1\},\dots,\min\{x_d,y_d\})+A(\max\{x_1,y_1\},\dots,\max\{x_d,y_d\})
\leq A(\mathbf x)+A(\mathbf y).$$
\end{definition}

We now state the crucial proposition, whose proof we will give in Section \ref{s:mongeproof}.
\begin{prop}
\label{monge}
The cost array $C$ defined in \eqref{cost} has the Monge property.
\end{prop}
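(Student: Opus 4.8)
The plan is to reduce the $d$-dimensional Monge inequality to a one-dimensional statement about sorted vectors, using the explicit formula from Proposition~\ref{costalt}. Writing $\mathbf x \wedge \mathbf y$ and $\mathbf x \vee \mathbf y$ for the componentwise minimum and maximum, the key observation is that the multiset of entries of $\mathbf x \wedge \mathbf y$ together with the multiset of entries of $\mathbf x \vee \mathbf y$ is exactly the multiset of entries of $\mathbf x$ together with those of $\mathbf y$: for each coordinate $i$, the pair $\{x_i, y_i\}$ equals the pair $\{\min\{x_i,y_i\}, \max\{x_i,y_i\}\}$. Since $C(\mathbf m)$ depends only on the sorted vector $\widetilde{\mathbf m}$, I would set $\mathbf a = \widetilde{\mathbf x}$, $\mathbf b = \widetilde{\mathbf y}$ (sorted ascending), and let $\mathbf u = \widetilde{\mathbf x \wedge \mathbf y}$, $\mathbf v = \widetilde{\mathbf x \vee \mathbf y}$; the goal becomes
\begin{equation*}
\sum_{i=1}^{\lfloor d/2\rfloor}(u_{d-i+1}-u_i) + \sum_{i=1}^{\lfloor d/2\rfloor}(v_{d-i+1}-v_i) \leq \sum_{i=1}^{\lfloor d/2\rfloor}(a_{d-i+1}-a_i) + \sum_{i=1}^{\lfloor d/2\rfloor}(b_{d-i+1}-b_i),
\end{equation*}
where $(\mathbf u, \mathbf v)$ is obtained from $(\mathbf a, \mathbf b)$ by replacing, in each ``slot'' $i$, the pair $(a_i, b_i)$ by $(\min\{a_i,b_i\}, \max\{a_i,b_i\})$ and then re-sorting each of the two resulting length-$d$ vectors.

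The next step is to give a clean description of the quantity being summed. Using Proposition~\ref{costalt} and the telescoping already carried out there, $\sum_{i\le \lfloor d/2\rfloor}(\widetilde m_{d-i+1}-\widetilde m_i)$ equals the sum of the top $\lfloor d/2\rfloor$ sorted entries minus the sum of the bottom $\lfloor d/2\rfloor$ sorted entries. Equivalently — and this is the form I would actually use — for a real vector $\mathbf w$ of length $d$ one has
\begin{equation*}
C(\mathbf w) = \max_{\substack{S \subseteq [d]\\ |S| = \lfloor d/2\rfloor}} \sum_{k\in S} w_k \;-\; \min_{\substack{T\subseteq [d]\\ |T| = \lfloor d/2\rfloor}} \sum_{k\in T} w_k,
\end{equation*}
so that $C$ is a difference of a ``top-$\lfloor d/2\rfloor$ sum'' and a ``bottom-$\lfloor d/2\rfloor$ sum.'' Both of these are well-behaved under the min/max operation: I would prove the two submodularity-type inequalities
\begin{equation*}
\mathrm{top}_h(\mathbf x\wedge\mathbf y) + \mathrm{top}_h(\mathbf x\vee\mathbf y) \le \mathrm{top}_h(\mathbf x) + \mathrm{top}_h(\mathbf y), \qquad
\mathrm{bot}_h(\mathbf x\wedge\mathbf y) + \mathrm{bot}_h(\mathbf x\vee\mathbf y) \ge \mathrm{bot}_h(\mathbf x) + \mathrm{bot}_h(\mathbf y),
\end{equation*}
for $h = \lfloor d/2\rfloor$, where $\mathrm{top}_h$ and $\mathrm{bot}_h$ denote the sum of the $h$ largest and $h$ smallest components respectively. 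Subtracting these (the bottom one flips sign) gives exactly the claimed Monge inequality $C(\mathbf x\wedge\mathbf y) + C(\mathbf x\vee\mathbf y) \le C(\mathbf x) + C(\mathbf y)$. The $\mathrm{bot}_h$ inequality follows from the $\mathrm{top}_h$ inequality applied to $-\mathbf x, -\mathbf y$ after noting $\mathbf{(-x)}\wedge\mathbf{(-y)} = -(\mathbf x \vee \mathbf y)$, so really only one inequality must be established.

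For the $\mathrm{top}_h$ inequality, I would argue coordinatewise. Fix a set $S$ with $|S| = h$ achieving $\mathrm{top}_h(\mathbf x\wedge\mathbf y) = \sum_{k\in S}\min\{x_k,y_k\}$, and similarly $S'$ achieving $\mathrm{top}_h(\mathbf x\vee\mathbf y) = \sum_{k\in S'}\max\{x_k,y_k\}$. Partition the index set according to membership in $S$ and $S'$: for $k \in S\cap S'$ we contribute $\min\{x_k,y_k\} + \max\{x_k,y_k\} = x_k + y_k$; for $k \in S\setminus S'$ we contribute $\min\{x_k,y_k\} \le x_k$ and separately $\le y_k$; for $k\in S'\setminus S$ we contribute $\max\{x_k,y_k\}$, which we must bound by a term from $\mathrm{top}_h(\mathbf x)$ or $\mathrm{top}_h(\mathbf y)$. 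The cleanest route is the classical characterization $\mathrm{top}_h(\mathbf w) = \max\{\,\langle \mathbf w, \mathbf z\rangle : \mathbf z\in[0,1]^d,\ \sum z_k = h\,\}$ (the maximum of a linear functional over the hypersimplex); then if $\mathbf 1_S$ and $\mathbf 1_{S'}$ are the optimal $0/1$ vectors for $\mathbf x\wedge\mathbf y$ and $\mathbf x\vee\mathbf y$, one has $\mathbf 1_S\wedge\mathbf 1_{S'} + \mathbf 1_S\vee\mathbf 1_{S'} = \mathbf 1_S + \mathbf 1_{S'}$ with both $\mathbf 1_S\wedge\mathbf 1_{S'}$ and $\mathbf 1_S\vee\mathbf 1_{S'}$ lying in the hypersimplex after a harmless adjustment (or one simply observes $\langle \mathbf x\wedge\mathbf y, \mathbf 1_S\rangle + \langle\mathbf x\vee\mathbf y,\mathbf 1_{S'}\rangle \le \langle\mathbf x, \mathbf 1_S\vee\mathbf 1_{S'}\rangle + \langle\mathbf y,\mathbf 1_S\wedge\mathbf 1_{S'}\rangle$ by checking each coordinate, then bounds each of the two resulting inner products by the corresponding $\mathrm{top}_h$). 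I expect the main obstacle to be precisely this bookkeeping: making the coordinatewise case analysis airtight so that every $\max\{x_k,y_k\}$ term coming from $S'\setminus S$ is matched against an unused large coordinate of $\mathbf x$ or $\mathbf y$ — in other words, verifying that the sets appearing on the right genuinely have size $\le h$ so they are admissible competitors in the $\mathrm{top}_h$ maximum. Once the linear-programming (hypersimplex) description is in place this becomes routine, which is why I would set it up that way from the start rather than manipulating sorted vectors directly.
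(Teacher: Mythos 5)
Your route is genuinely different from the paper's, and its skeleton is sound. The paper argues locally: Lemma \ref{submatrix} reduces the Monge property to two-dimensional planes, Lemma \ref{plusone} reduces those to adjacent $2\times 2$ subarrays, and Lemma \ref{incdecsame} tracks the effect of unit increments, leaving a six-case verification. You argue globally: by Proposition \ref{costalt}, $C=\mathrm{top}_h-\mathrm{bot}_h$ with $h=\lfloor d/2\rfloor$, the Monge property is exactly submodularity with respect to componentwise $\wedge$ and $\vee$, and your negation trick correctly reduces everything to the single inequality $\mathrm{top}_h(\mathbf x\wedge\mathbf y)+\mathrm{top}_h(\mathbf x\vee\mathbf y)\le\mathrm{top}_h(\mathbf x)+\mathrm{top}_h(\mathbf y)$. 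That inequality is true, and if proved it would yield a shorter, case-free proof of Proposition \ref{monge}.

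The gap is precisely in the step you call routine. The bound you propose to get ``by checking each coordinate,'' namely $\langle \mathbf x\wedge\mathbf y,\mathbf 1_S\rangle+\langle\mathbf x\vee\mathbf y,\mathbf 1_{S'}\rangle\le\langle\mathbf x,\mathbf 1_S\vee\mathbf 1_{S'}\rangle+\langle\mathbf y,\mathbf 1_S\wedge\mathbf 1_{S'}\rangle$, is false: for $k\in S'\setminus S$ with $\max\{x_k,y_k\}=y_k>x_k$ the left side collects $y_k$ while the right side offers only $x_k$. Concretely, with $d=2$, $h=1$, $\mathbf x=(4,1)$, $\mathbf y=(2,6)$, $S=\{1\}$, $S'=\{2\}$, the left side is $2+6=8$ and the right side is $4+1+0=5$ (while the true bound $\mathrm{top}_1(\mathbf x)+\mathrm{top}_1(\mathbf y)=10$ does hold). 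Moreover $\mathbf 1_S\vee\mathbf 1_{S'}$ and $\mathbf 1_S\wedge\mathbf 1_{S'}$ have coordinate sums $|S\cup S'|$ and $|S\cap S'|$, so they are not points of the hypersimplex $\{\mathbf z\in[0,1]^d:\sum_k z_k=h\}$; the ``harmless adjustment'' is where all the content lies, because which of $x_k,y_k$ must absorb $\max\{x_k,y_k\}$ is determined by the values, not by the sets. The fix is elementary but different from what you wrote: with $c=|S\cap S'|$, charge each $k\in S\cap S'$ to both vectors ($\min+\max=x_k+y_k$), charge each $k\in S'\setminus S$ to whichever vector attains the maximum (say $a$ of them to $\mathbf x$ and $b$ to $\mathbf y$, $a+b=h-c$), and then charge $b$ of the indices of $S\setminus S'$ to $\mathbf x$ and the remaining $a$ to $\mathbf y$ (legitimate since $\min\{x_k,y_k\}$ is at most either); both competitor sets then have exactly $c+a+b=h$ elements, giving the bound by $\mathrm{top}_h(\mathbf x)+\mathrm{top}_h(\mathbf y)$. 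Alternatively, the representation $\mathrm{top}_h(\mathbf w)=\min_{\lambda\in\mathbb R}\bigl(h\lambda+\sum_k\max\{w_k-\lambda,0\}\bigr)$ together with the two-variable submodularity of $(w,\lambda)\mapsto\max\{w-\lambda,0\}$ gives a clean proof; a pairwise ``decreasing differences'' argument would also work, but that essentially reconstructs the paper's Lemmas \ref{submatrix}--\ref{incdecsame}.
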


 This proposition, then, guarantees the existence of a greedy algorithm to compute $\EMD_d$.  (This justifies our writing ``$\min$" instead of ``$\inf$" in our definition; we also could have used a compactness argument as in \cite{bw}.)  The greedy algorithm described in \cite{bein} is a generalization of the two-dimensional ``northwest corner rule." Just as in the $d=2$ case (see Section \ref{s:EMD2}), for generic $d$ this algorithm arrives at its solution in the form of an array $J \in \mathcal J_{\bmu}$ whose support is a \textbf{chain}, i.e., pairwise comparable under the product order on $[n]^d$. (In \cite{bw}, Proposition 4, the ``straightening" procedure that converts the support of any $J$ into a chain, without increasing the total cost, is valid precisely because the cost array $C(i,j)=|i-j|$ has the Monge property.)   Rather than describe this greedy algorithm, which is already well-known (see \cite{bein} or \cite{kline}), our goal is instead to find the expected value of $\EMD_d$.  To this end, the importance of the algorithm is the following:

\begin{corollary}
\label{chain}
The minimum in \eqref{emddef} occurs for some $J \in \mathcal J_{\bmu}$ whose support is a chain in $[n]^d$.\end{corollary}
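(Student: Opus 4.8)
The plan is to deduce Corollary \ref{chain} directly from Proposition \ref{monge} together with the known structure theory for the greedy algorithm of \cite{bein}. Since Proposition \ref{monge} establishes that the cost array $C$ has the Monge property, the results of \cite{bein} apply: the $d$-dimensional northwest-corner-type greedy algorithm, run on the marginals $\bmu$, produces an array $J \in \mathcal J_{\bmu}$ that actually attains the minimum in \eqref{emddef}. So the first step is simply to invoke \cite{bein} to conclude that this particular $J$ is optimal.

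The second step is to observe that this greedy solution has support forming a chain in $[n]^d$. The reason is the mechanics of the algorithm itself: at each stage it selects the ``corner'' position $\mathbf m$ (the one with all coordinates minimal among the remaining mass), assigns it the largest feasible value (namely $\min_i \mu_i(\text{current first available bin in coordinate }i)$), and then decrements the marginals, which zeroes out at least one ``slab'' and advances the relevant coordinate(s). Consequently each newly chosen position dominates all previously chosen positions coordinatewise, so the positions chosen over the course of the algorithm are totally ordered under the product order $\preceq$. I would state this as the defining feature of the generalized northwest corner rule, citing \cite{bein} (and noting the $d=2$ analogue in Section \ref{s:EMD2} and \cite{bw}), rather than reproving it.

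Alternatively --- and this is the cleaner route if one does not want to lean on the precise form of the algorithm in \cite{bein} --- I would argue via a straightening/exchange argument generalizing \cite{bw}, Proposition 4: start from \emph{any} optimal $J$, and show that whenever its support contains two incomparable positions $\mathbf x, \mathbf y$, one can shift mass $\varepsilon = \min\{J(\mathbf x), J(\mathbf y)\}$ off of $\mathbf x$ and $\mathbf y$ and onto $\mathbf x \wedge \mathbf y := (\min\{x_i,y_i\})_i$ and $\mathbf x \vee \mathbf y := (\max\{x_i,y_i\})_i$. This preserves membership in $\mathcal J_{\bmu}$ (each hyperplane sum is unchanged, since the multiset of $i$th coordinates appearing with weight $\varepsilon$ is unchanged), and by the Monge inequality $C(\mathbf x \wedge \mathbf y) + C(\mathbf x \vee \mathbf y) \le C(\mathbf x) + C(\mathbf y)$ it does not increase the objective \eqref{emddef}. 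Iterating, one drives the support toward an antichain-free, i.e., chain, configuration. The main obstacle here is the termination argument: one must exhibit a potential function (e.g., $\sum_{\mathbf m} C(\mathbf m) J(\mathbf m)$ together with a lexicographic tie-breaker such as $\sum_{\mathbf m} \|\mathbf m\|^2 J(\mathbf m)$, which strictly decreases under each exchange since $\|\mathbf x \wedge \mathbf y\|^2 + \|\mathbf x \vee \mathbf y\|^2 < \|\mathbf x\|^2 + \|\mathbf y\|^2$ for incomparable $\mathbf x, \mathbf y$) that is bounded below and strictly decreases at each step, so that the process must halt --- and it can only halt when the support is a chain. Given the paper's stated intent to defer to \cite{bein} for the algorithm, I expect the final write-up to take the first route and keep the corollary's proof to a couple of sentences.
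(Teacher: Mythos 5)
Your first route is exactly what the paper does: it invokes Proposition \ref{monge} to justify applying the greedy (generalized northwest corner) algorithm of \cite{bein}, whose output is an optimal $J$ with chain support, and treats the corollary as an immediate consequence without further proof. Your alternative straightening/exchange argument is also mentioned in the paper, but only parenthetically as the $d=2$ mechanism from \cite{bw}, Proposition 4, so the proposal is correct and essentially matches the paper's approach.
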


Since there is nothing special about the condition $|\mu_i|=1$ from the perspective of transport problems, Corollary \ref{chain} also holds in a discrete setting using integer compositions in place of probability distributions.  We will take this discrete approach in the next section, where we use a highly efficient combinatorial method to find the optimal array $J$ for any $\bmu$.

\section{A discrete approach}
\label{s:discreet}

We follow the method from \cite{bw}, with a view toward constructing a generating function in the next section.  In place of $\mathcal P_n$, we temporarily turn our attention to $\mathcal C(s,n)$, the set of (weak) compositions of some positive integer $s$ into $n$ parts.  That is, elements of $\mathcal C(s,n)$ are $n$-tuples of nonnegative integers whose sum is $s$ (whereas before, the elements of $\mathcal P_n$ were $n$-tuples of nonnegative real numbers whose sum was $1$); we can also think of compositions as histograms. The cost function $C$, however, remains the same as before, since it still describes distances among the $n$ bins in each of the $d$ compositions.

In this section, $\bmu=(\mu_1,\dots,\mu_d)$ denotes a sequence of compositions $\mu_i \in \mathcal C(s,n)$.  It is tempting simply to adjust the definition of $\mathcal J_{\bmu}$ so that arrays in the set must have nonnegative integer entries summing to $s$; then we could just re-use the definition \eqref{emddef} to obtain a definition for the discrete $\EMD_d$.  Although this is one viable approach, nevertheless, in light of Corollary \ref{chain}, we need only consider those arrays whose support is a chain; therefore we will work with the following set of arrays from this point forward:
\begin{equation*}
\label{chainarrays}
    \mathcal J^s_{(n^d)}:=\left\{ J \in (\mathbb Z_{\geq 0})^{n\times \cdots \times n} \: \Bigg | \: \sum_\mathbf m J(\mathbf m)=s,\text{ and the support of }J\text{ is a chain in }[n]^d\right\}.
\end{equation*}

We will now show that each $d$-tuple of compositions $\bmu \in \mathcal C(s,n)\times \cdots \times \mathcal C(s,n)$ corresponds to a unique array $J_{\bmu} \in \mathcal J_{(n^d)}^s$; therefore, by the end of the next subsection, we will have a direct computational definition for the discrete version of the EMD, which avoids taking the mimimum over a set of arrays as we must in the definition \eqref{emddef} of the continuous EMD.  Once we have this  definition for the discrete EMD, we will be able to recover the continuous version by scaling all the $\mu_i$ by $1/s$.  Near the end of the paper, we will do exactly this, and then let $s \rightarrow \infty$, in order to translate discrete results back into the continuous setting.

\subsection{Generalized RSK correspondence}

The authors of \cite{bw} use the Robinson-Schensted-Knuth correspondence to great effect in order to determine a unique optimal matrix $J_{(\mu_1,\mu_2)}$ for an ordered pair of compositions $(\mu_1,\mu_2)$. We now apply this same idea to $d$ compositions in order to uniquely determine an optimal $d$-dimensional array.  This will allow us to calculate the discrete $\EMD_d$ directly (and even more efficiently, in many cases, than by using the greedy algorithm mentioned above).

For non-experts, we summarize here a special case of the correspondence.  (For full details, see Chapter 4 of \cite{fulton}.)  The Robinson-Schensted-Knuth (RSK) correspondence furnishes a bijection:
\begin{equation*} 
 \left\{\parbox{4.6cm}{ordered pairs of semistandard Young tableaux of the same shape, with entries in $[n]$}\right\} \longleftrightarrow \left\{ \parbox{4.2cm}{$n \times n$  matrices with \\nonnegative integer entries}\right\}_{\textstyle.}
\end{equation*}
For our purposes, we will restrict our attention to the special case of one-row tableaux, since any composition in $\mathcal C(s,n)$ corresponds uniquely to a one-row tableau containing $s$ boxes with entries from $[n]$.  As an example, consider the two compositions
\begin{align*}
    \mu_1&=(1,2,3,4),\\
    \mu_2 &= (5,0,2,3)
\end{align*}
in $\mathcal C(10,4)$.  We associate to each composition $\mu_i$ a one-row tableau $T(\mu_i)$, which we fill so that the entry $k$ appears $\mu_i(k)$ times:
\begin{align*}
\Yvcentermath1
    T(\mu_1)&=\young(1223334444) \\
    T(\mu_2)&= \young(1111133444)
\end{align*}
Considering these tableaux as the two rows of a $2\times s$ matrix, we then have
\begin{equation*}
    M_{(\mu_1,\mu_2)}=
  \begin{bmatrix}
    1&2&2&3&3&3&4&4&4&4\\
    1&1&1&1&1&3&3&4&4&4
    \end{bmatrix}_{\textstyle .}
\end{equation*}

Finally, we fill in an $n\times n$ array $J_{(\mu_1,\mu_2)}$ whose $(i,j)$ entry equals the number of times the column $\left[\begin{smallmatrix} i\\j\end{smallmatrix}\right]$ appears in $M_{(\mu_1,\mu_2)}$.  For example, $\left[\begin{smallmatrix} 4\\4\end{smallmatrix}\right]$ appears three times, so we write a $3$ in position $(4,4)$.  Filling in the rest of the array, we obtain the correspondence
\begin{equation*}
    (\mu_1,\mu_2) \longleftrightarrow \big(T(\mu_1),T(\mu_2)\big)\longleftrightarrow M_{(\mu_1,\mu_2)}\longleftrightarrow J_{(\mu_1,\mu_2)}=
    \begin{bmatrix}
    1&0&0&0\\
    2&0&0&0\\
    2&0&1&0\\
    0&0&1&3
    \end{bmatrix}_{\textstyle.}
\end{equation*}
Note that we can also reverse the procedure, starting with the array $J_{(\mu_1,\mu_2)}$, translating its entries into a two-row matrix, and finally recovering the original pair of tableaux (and hence the pair of compositions).  Therefore this is indeed a bijection.  In the context of the EMD, the array $J_{(\mu_1,\mu_2)}$ has two significant properties:
\begin{itemize}
    \item The row and column sums coincide with the original compositions $\mu_1$ and $\mu_2$, so $J_{(\mu_1,\mu_2)}$ is a solution to the discrete earth mover's problem for $\mu_1$ and $\mu_2$.
    \item Since both rows of $M_{(\mu_1,\mu_2)}$ are nondecreasing, the support of $J_{(\mu_1,\mu_2)}$ is a chain in $[n]\times[n]$.
\end{itemize}
In summary, we have the following bijective correspondence in the case $d=2$:
\begin{equation*}
    \mathcal C(s,n)\times \mathcal C(s,n) \longleftrightarrow \mathcal J_{(n^2)}^s
\end{equation*}

This RSK correspondence extends naturally to $d$-tuples of compositions in $\mathcal C(s,n)$.  (For experts, details about the existence of this multivariate RSK generalization can be found in \cite{caselli}.)
Given $\bmu=(\mu_1,\dots,\mu_d)$, the tableaux corresponding to the $\mu_i$ uniquely determine a $d \times s$ matrix $M_{\bmu}$, which in turn determines a unique $n \times \cdots \times n$ array $J_{\bmu}\in\mathcal J_{(n^d)}^s$.  This correspondence is again bijective, establishing the following special case of the generalized RSK correspondence:
\begin{align}
\label{genrsk}
    \mathcal C(s,n)\times \cdots \times  \mathcal C(s,n) &\longleftrightarrow \mathcal J_{(n^d)}^s\\
    \bmu &\longmapsto J_{\bmu} \nonumber
\end{align}
This correspondence leads us to the following definition of the discrete EMD:
\begin{definition}
For positive integers $d$, $n$, and $s$, let $\bmu=(\mu_1,\dots,\mu_d)$ with each $\mu_i \in \mathcal C(s,n)$.  Let $J_{\bmu}$ the unique array corresponding to $\bmu$, as in \eqref{genrsk}.  Let $C$ be the cost function on $[n]^d$ as in \eqref{cost}.  Then we define the \textbf{discrete generalized earth mover's distance} to be
\begin{equation}
\label{defemddisc}
    \EMD_d^s(\bmu):=\sum_{\mathbf m \in [n]^d} C(\mathbf m) J_{\bmu}(\mathbf m),
\end{equation}
where we write the superscript $s$ to distinguish this discrete version from the continuous version.
\end{definition}

This definition in \eqref{defemddisc} is largely conceptual; in practice, we can calculate $\EMD_d^s(\bmu)$ directly from the matrix $M_{\bmu}$, since the support of $J_{\bmu}$ is determined by the columns of $M_{\bmu}$:

\begin{theorem}
\label{rskemd}
Let $\bmu=(\mu_1,\dots,\mu_d)$ with each $\mu_i \in \mathcal C(s,n)$, and let $C$ be the cost function in \eqref{cost}.  Let $M_{\bmu}$ be the unique $d \times s$ array corresponding to $\bmu$ via the generalized RSK correspondence, as described above, and let $M_{\bmu}(\bullet,j)$ denote the $j^\text{th}$ column vector in $M_{\bmu}$.  Then
\begin{equation*}
    \normalfont{\EMD}_d^s(\bmu)=\sum_{j=1}^s C(M_{\bmu}(\bullet,j)).
\end{equation*}
\end{theorem}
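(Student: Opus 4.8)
The plan is to unwind both sides directly from the construction of the generalized RSK correspondence in \eqref{genrsk}; no optimization or combinatorial subtlety remains at this stage, since the minimization in the definition of the continuous EMD has already been absorbed, via Corollary \ref{chain}, into the single chain-supported array $J_{\bmu}$. Starting from the definition \eqref{defemddisc}, we have $\EMD_d^s(\bmu)=\sum_{\mathbf m \in [n]^d} C(\mathbf m)\,J_{\bmu}(\mathbf m)$, so the entire task reduces to re-expressing this weighted sum over array positions as a sum over the $s$ columns of $M_{\bmu}$.

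First I would record explicitly the identity that is built into the passage $\bmu \leftrightarrow M_{\bmu} \leftrightarrow J_{\bmu}$ described above: for each $\mathbf m=(m_1,\dots,m_d)\in[n]^d$, the entry $J_{\bmu}(\mathbf m)$ is by construction the number of indices $j\in[s]$ with $M_{\bmu}(\bullet,j)=\mathbf m$, i.e.\ $J_{\bmu}(\mathbf m)=\#\{\,j\in[s] : M_{\bmu}(\bullet,j)=\mathbf m\,\}$. (For $d>2$ this uses the existence of the multivariate RSK generalization of \cite{caselli}, which guarantees that $M_{\bmu}$ is well defined with weakly increasing rows, so that $\bmu\mapsto J_{\bmu}$ really is a bijection onto $\mathcal J^s_{(n^d)}$; for $d=2$ it is the explicit column-counting recipe given in the example.) Equivalently, the multiset $\{M_{\bmu}(\bullet,1),\dots,M_{\bmu}(\bullet,s)\}$ of columns is exactly the multiset of points of $[n]^d$ in which each $\mathbf m$ occurs with multiplicity $J_{\bmu}(\mathbf m)$.

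Substituting this count into \eqref{defemddisc} and interchanging the two finite sums then gives
\begin{align*}
\EMD_d^s(\bmu) &= \sum_{\mathbf m \in [n]^d} C(\mathbf m)\cdot \#\{\,j\in[s] : M_{\bmu}(\bullet,j)=\mathbf m\,\} \\
&= \sum_{\mathbf m \in [n]^d}\ \sum_{\substack{j\in[s]\\ M_{\bmu}(\bullet,j)=\mathbf m}} C\big(M_{\bmu}(\bullet,j)\big) \\
&= \sum_{j=1}^s C\big(M_{\bmu}(\bullet,j)\big),
\end{align*}
where the last equality holds because the $s$ columns are partitioned by their common value in $[n]^d$, so summing over $\mathbf m$ and then over the columns achieving that value restores the sum over all $j\in[s]$. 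This is the asserted formula.

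I expect the only point requiring care — and it is bookkeeping rather than a real obstacle — to be making the identification of the previous paragraph fully rigorous: one must confirm that the multiset of columns of $M_{\bmu}$ is \emph{literally} the multiset recorded by the entries of $J_{\bmu}$, with no column omitted and none double-counted. This is immediate once we recall that $J_{\bmu}(\mathbf m)$ is defined to be precisely the multiplicity of $\mathbf m$ among those columns, so beyond stating this cleanly I anticipate no difficulty.
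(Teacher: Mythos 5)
Your proposal is correct and follows essentially the same route as the paper: the paper's proof likewise observes that $J_{\bmu}(\mathbf m)$ is by construction the number of columns of $M_{\bmu}$ equal to $\mathbf m$, so the weighted sum in \eqref{defemddisc} collapses to a sum over the $s$ columns. Your explicit partition-of-columns bookkeeping is just a more detailed write-up of the same one-line argument.
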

\begin{proof}
Consider the definition of $\EMD_d^s$ in \eqref{defemddisc}. By definition, $J_{\bmu}(\mathbf m)$ equals the number of occurrences of $\mathbf m$ as a column vector of the matrix $M_{\bmu}$. Therefore $J_{\bmu}(\mathbf m)=0$ unless $\mathbf m$ is one of those column vectors, and so we can simply sum over the $s$ column vectors to obtain the result.
\end{proof}

\begin{remark}
This construction via RSK is more efficient than the aforementioned greedy algorithm for computing $\EMD_d^s$ when $d$ or $n$ is sufficiently large: rather than filling a $d$-dimensional array in $O(d^2n)$ time, we need consider only the $s$ column vectors of a $d\times s$ matrix.
\end{remark}
\begin{example} 
As an example for $d=3$, consider the three compositions $$\mu_1=(4,0,1), \hspace{0.5cm} \mu_2=(1,2,2),\hspace{0.5cm} \mu_3=(0,5,0)$$ in $\mathcal C(5,3)$.  These correspond to the tableaux \young(11113), \young(12233), and \young(22222) respectively. Stacking these tableaux vertically gives us the matrix
$$M_{\bmu}=\begin{bmatrix}
1&1&1&1&3\\
1&2&2&3&3\\
2&2&2&2&2
\end{bmatrix}_{\textstyle.}$$ Now using Theorem \ref{rskemd} on the five columns of $M_{\bmu}$, we compute that
\begin{align*}
    \EMD_3^5(\bmu)&= C(1,1,2)+C(1,2,2)+C(1,2,2)+ C(1,3,2) + C(3,3,2)\\
    &=1+1+1+2+1\\
    &=\mathbf{6}.
\end{align*}
\end{example}

Before advancing to the main problem of the paper, we show that $\EMD_3$ can actually be expressed in terms of the classical $\EMD_2$.  (In the following proposition, we suppress the superscript $s$ because the result holds for both the discrete and the continuous version of EMD: the equality is independent of $s$, and therefore still holds after dividing both sides by $s$ and letting $s\rightarrow \infty$.)

\begin{prop}
\label{halfsum}
The value of $\normalfont \EMD_3$ is half the sum of the three pairwise $\normalfont\EMD_2$ values:
\begin{equation*}
\normalfont
    \EMD_3(\mu_1,\mu_2,\mu_3)=\frac{1}{2}\Bigg(\EMD_2(\mu_1,\mu_2)+\EMD_2(\mu_1,\mu_3)+\EMD_2(\mu_2,\mu_3)\Bigg)
\end{equation*}
\end{prop}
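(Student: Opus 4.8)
The plan is to reduce the identity to a pointwise statement about the $d=3$ cost function and then to exploit the fact that the generalized RSK matrix $M_{\bmu}$ simultaneously records the classical ($d=2$) RSK data for each of the three pairs of coordinates. Following the parenthetical remark before the proposition, it suffices to prove the equality for the discrete $\EMD_3^s$ and then divide by $s$ and let $s\to\infty$. The pointwise input is this: by Proposition~\ref{costalt}, for $d=3$ the cost of a position is $C(a,b,c)=\widetilde m_3-\widetilde m_1=\max\{a,b,c\}-\min\{a,b,c\}$, and assuming $a\le b\le c$, both sides of
\begin{equation*}
\max\{a,b,c\}-\min\{a,b,c\}=\tfrac12\big(|a-b|+|a-c|+|b-c|\big)
\end{equation*}
equal $c-a$, so the identity holds in general. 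Thus the $d=3$ cost at a position is half the sum of the three $d=2$ costs at its coordinate projections. (This is a genuinely three-dimensional coincidence; no such identity survives for $d>3$.)

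Next I would apply Theorem~\ref{rskemd}. Writing $M:=M_{\bmu}$ for the $3\times s$ matrix of the generalized RSK correspondence and $M(i,j)$ for its entries, Theorem~\ref{rskemd} gives $\EMD_3^s(\bmu)=\sum_{j=1}^s C\big(M(1,j),M(2,j),M(3,j)\big)$. Substituting the pointwise identity and regrouping the sum by pairs of rows yields
\begin{equation*}
\EMD_3^s(\bmu)=\tfrac12\sum_{\{i,k\}}\sum_{j=1}^s\big|M(i,j)-M(k,j)\big|,
\end{equation*}
the outer sum running over the three $2$-element subsets of $\{1,2,3\}$.

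The final step is to recognize each inner sum as a pairwise $\EMD_2^s$. The key observation is that in the one-row-tableau setting used throughout Section~\ref{s:discreet}, the $i$-th row of $M_{\bmu}$ is simply the tableau $T(\mu_i)$ written out in nondecreasing order, and this row does not depend on the other components of $\bmu$. Hence, for any pair $i<k$, deleting all rows of $M_{\bmu}$ except the $i$-th and $k$-th produces exactly the matrix $M_{(\mu_i,\mu_k)}$ of the classical RSK correspondence applied to $(\mu_i,\mu_k)$. Applying Theorem~\ref{rskemd} in the case $d=2$, whose cost function is $C(a,b)=|a-b|$, identifies $\sum_{j=1}^s|M(i,j)-M(k,j)|=\EMD_2^s(\mu_i,\mu_k)$. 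Summing over the three pairs gives the claimed identity for $\EMD_3^s$; dividing by $s$ and letting $s\to\infty$ gives the continuous statement.

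I do not anticipate a serious obstacle; the one point deserving explicit care is the compatibility used in the last step --- that a row-restriction of the generalized RSK matrix is itself the RSK matrix of the corresponding sub-tuple --- since the entire argument hinges on reading the three pairwise transport problems off of a single array. In the one-row case this is essentially immediate from the construction (each row is filled from a single composition, independently of the rest), but it is worth stating explicitly and is the conceptual heart of the proof. An alternative, RSK-free route would assemble optimal pairwise transport plans into a single $3$-dimensional chain-supported array via Corollary~\ref{chain}; that also works but is more cumbersome and obscures why the factor $\tfrac12$ appears.
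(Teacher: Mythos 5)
Your proposal is correct and follows essentially the same route as the paper: both arguments read all three pairwise transport problems off the single RSK matrix $M_{\bmu}$ (the paper asserts the row-restriction compatibility implicitly, you make it explicit) and both reduce the identity to the pointwise fact that for sorted entries $a\le b\le c$ one has $(b-a)+(c-a)+(c-b)=2(c-a)=2\,C(a,b,c)$. No substantive differences.
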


\begin{proof}
Let $\bmu=(\mu_1,\mu_2,\mu_3)$ as usual.  In each column $j$ of the matrix $M_{\bmu}$, call the three entries $a_j,b_j,c_j$, labeled so that $a_j\leq b_j \leq c_j$.  Each of the three pairs $(a_j,b_j)$, $(a_j,c_j)$, and $(b_j,c_j)$ corresponds naturally to one of the pairs $(\mu_1,\mu_2)$, $(\mu_1,\mu_3)$, and $(\mu_2,\mu_3)$, where row $i$ corresponds to $\mu_i$.  Therefore by Theorem \ref{rskemd}, we have
\begin{align*}
    \EMD_2(\mu_1,\mu_2)+\EMD_2(\mu_1,\mu_3)+\EMD_2(\mu_2,\mu_3)&=\sum_{j=1}^s C(a_j,b_j)+C(a_j,c_j)+C(b_j,c_j)\\
    &= \sum_j (b_j-a_j)+(c_j-a_j)+(c_j-b_j)\\
    &=\sum_j 2c_j - 2a_j\\
    &= 2 \sum_j c_j - a_j\\
    &= 2 \sum_j C(a_j,b_j,c_j)\\
    &= 2 \cdot \EMD_3(\mu_1,\mu_2,\mu_3).
\end{align*}
\end{proof}

This relationship does not generalize to $d>3$, because in general, the telescoping summand in the proof does not reduce in terms of a higher-dimensional cost function.  For example, when $d=4$, the analog of the third line above is $\sum_j 3d_j+c_j-b_j-3a_j$, or $\sum_j C(a_j,b_j,c_j,d_j)+2(d_j-a_j)$.

\section{Expected value of $\EMD_d$}
\label{s:expval}

Again we will follow and extend the methods used in \cite{bw} to arbitrary values of $d$.  First we will define a generating function of two variables to record the values of $\EMD_d^s$, which we will then differentiate in order to sum up all of these values.  This will allow us to compute expected value for $\EMD_d^s$ simply by reading off coefficients from a generating function of a single variable.

Because we are about to make a recursive definition, we will now need to consider $d$-tuples $\bmu$ consisting of compositions with different numbers of bins --- i.e., different values $n_i$ such that each $\mu_i\in \mathcal C(s,n_i)$. Therefore, $\mathbf n$ will denote this vector of bin numbers $(n_1,\dots,n_d)$, and we will write $(n^d)$ for the special vector $(n,\dots,n)$, which arises most frequently in applications.  

In order to encode an inclusion-exclusion argument, we will also need to define an indicator vector $\mathbf e(A)$ for a subset $A \subseteq [d]$.  Namely, \begin{equation*}
    \mathbf e(A):=\sum_{i\in A}\mathbf e_i
\end{equation*}
is the vector whose $i^\text{th}$ component is 1 if $i\in A$ and $0$ otherwise.  For example, if $d=5$, and $A=\{2,4,5\}$, then $\mathbf e(A)=(0,1,0,1,1)$.

\subsection{Generating function for the discrete case}

For fixed $\mathbf n=(n_1,\dots,n_d)$, we first define a generating function in two indeterminates $z$ and $t$:
\begin{equation}
    \label{hdef}
    H_\mathbf n(z,t):=\sum_{s=0}^\infty \left(\sum_{\bmu\in \mathcal C(s,n_1)\times \cdots \times \mathcal C(s,n_d)} z^{\EMD_d^s(\bmu)}\right)t^s.
\end{equation} We observe that the coefficient of $z^rt^s$ is the number of elements $\bmu\in \mathcal C(s,n_1)\times \cdots \times \mathcal C(s,n_d)$ such that $\EMD_d^s(\bmu)=r$.

A recursive definition of this generating function, for the $d=2$ case, is derived in \cite{bw}, Theorem 3.  Our generalization for $d>2$ follows: 

\begin{prop}
\label{hrecur}
Fix $\mathbf n=(n_1,\dots,n_d).$  The generating function $H_\mathbf n:=H_\mathbf n(z,t)$ has the following recursive definition, where the sum is over all nonempty subsets $A\subseteq [d]$:
$$
H_\mathbf n=
\frac{
\sum_A (-1)^{|A|-1}\cdot H_{\mathbf n - \mathbf e(A)}
}
{1-z^{C(\mathbf n)}t},
$$
where $H_{(1^d)}=\frac{1}{1-t}$, and $H_{\mathbf n-\mathbf e(A)}=0$ if $\mathbf n-\mathbf e(A)$ contains a $0$.
\end{prop}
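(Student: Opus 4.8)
The plan is to reinterpret $H_{\mathbf n}(z,t)$ as a generating function over multichains in a product poset, and then to carry out an inclusion--exclusion over which coordinates are ``maximal.'' As a first step, combining the generalized RSK correspondence \eqref{genrsk} with Theorem \ref{rskemd}, I would rewrite
\[
H_{\mathbf n}(z,t)=\sum_{s=0}^{\infty} t^{s}\sum_{\mathbf q_1\preceq\cdots\preceq\mathbf q_s} z^{\,C(\mathbf q_1)+\cdots+C(\mathbf q_s)},
\]
where the inner sum runs over all weakly increasing length-$s$ sequences (i.e., multichains of length $s$) in the poset $P_{\mathbf n}:=[n_1]\times\cdots\times[n_d]$ under the componentwise order. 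Indeed, stacking the one-row tableaux $T(\mu_i)$ identifies $\mathcal C(s,n_1)\times\cdots\times\mathcal C(s,n_d)$ with the set of $d\times s$ matrices whose $i^{\text{th}}$ row is a nondecreasing word in $[n_i]$ --- equivalently, reading off the columns, with the length-$s$ multichains in $P_{\mathbf n}$ --- and under this identification $\EMD_d^s(\bmu)=\sum_{j=1}^{s} C(M_{\bmu}(\bullet,j))$ is precisely $\sum_j C(\mathbf q_j)$.

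Next I would prove the single identity
\[
\sum_{A\subseteq[d]}(-1)^{|A|}\,H_{\mathbf n-\mathbf e(A)}=z^{C(\mathbf n)}t\cdot H_{\mathbf n}\qquad\text{for }\mathbf n\neq(1^d),
\]
from which the proposition follows immediately by isolating the $A=\emptyset$ term and dividing through by $1-z^{C(\mathbf n)}t$. To establish this, I would attach to each nonempty multichain $\mathbf q_1\preceq\cdots\preceq\mathbf q_s$ its \emph{top set} $B:=\{\,i\in[d]:\mathbf q_s(i)=n_i\,\}$, and observe that the multichain also lies in $P_{\mathbf n-\mathbf e(A)}$ if and only if $A\cap B=\emptyset$: because $\mathbf q_s$ is the largest element of the chain, the $i^{\text{th}}$ coordinate stays strictly below $n_i$ throughout exactly when $\mathbf q_s(i)<n_i$. (When $\mathbf n-\mathbf e(A)$ has a zero entry the poset $P_{\mathbf n-\mathbf e(A)}$ is empty, consistent with the convention $H_{\mathbf n-\mathbf e(A)}=0$.) Grouping the left-hand side by multichain and using $\sum_{A\subseteq[d]\setminus B}(-1)^{|A|}=0$ whenever $B\neq[d]$, everything cancels except the contributions of multichains with $\mathbf q_s=\mathbf n$. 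Deleting that last entry --- and conversely appending $\mathbf n$, which is always legitimate since $\mathbf n$ is the top element of $P_{\mathbf n}$ --- is a bijection from the multichains with $\mathbf q_s=\mathbf n$ onto all multichains in $P_{\mathbf n}$, which rescales the weight $z^{\sum C}\,t^{(\text{length})}$ by the factor $z^{C(\mathbf n)}t$; hence the left-hand side equals $z^{C(\mathbf n)}t\cdot H_{\mathbf n}$. The base case $H_{(1^d)}=\tfrac{1}{1-t}$ is immediate, since $\mathcal C(s,1)$ is a singleton and $C(1,\dots,1)=0$.

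The part I expect to require the most care is the bookkeeping for $s=0$ and the reason $\mathbf n=(1^d)$ must be excluded. The empty multichain lies in $P_{\mathbf n-\mathbf e(A)}$ exactly when $\mathbf n-\mathbf e(A)$ has no zero coordinate, so its net contribution to $\sum_A(-1)^{|A|}H_{\mathbf n-\mathbf e(A)}$ is $\sum_{A\subseteq[d]\setminus Z}(-1)^{|A|}$, where $Z:=\{\,i:n_i=1\,\}$; this vanishes precisely because $[d]\setminus Z\neq\emptyset$, i.e., because $\mathbf n\neq(1^d)$. (Symmetrically, if $n_i=1$ then $i$ lies in the top set of every multichain, so any $A$ meeting $Z$ contributes nothing on the nonempty side either, again matching $H_{\mathbf n-\mathbf e(A)}=0$.) Once these conventions are pinned down, the main inclusion--exclusion itself is routine; the real content of the write-up is stating the multichain reformulation and the top-set dichotomy precisely.
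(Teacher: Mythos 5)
Your proof is correct and takes essentially the same route as the paper's: both identify $H_\mathbf n$ with a weighted count of chain-supported data in $[n_1]\times\cdots\times[n_d]$, use the fact that the top element $\mathbf n$ can always be appended to a chain to produce the factor $1/(1-z^{C(\mathbf n)}t)$, and run the same inclusion--exclusion over the set of coordinates in which the chain's maximum element hits the boundary. Your version is somewhat more explicit about the $s=0$ term and the degenerate cases $n_i=1$ (which the paper treats only in passing, via the convention $H=0$ when a coordinate vanishes), but the underlying argument is the same.
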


\begin{proof}
Each $\bmu$ corresponds to a unique monomial
\begin{equation}
\label{monomial}
    \bmu \longleftrightarrow \prod_\mathbf m w_{\mathbf m}^{J_{\bmu}(\mathbf m)},
\end{equation}
where $\bmu \leftrightarrow J_{\bmu}$ is the RSK correspondence in \eqref{genrsk}.  The variables $w_\mathbf m$ are indexed by multi-indices $\mathbf m \in [n_1]\times \cdots \times [n_d]$.  Note that the degree of this monomial equals $s$ (the sum of the entries of $J_{\bmu}$).  Now making the substitution
\begin{equation}
\label{subst}
    w_\mathbf m \longmapsto z^{C(\mathbf m)}t,
\end{equation}
the above correspondence gives us the map
\begin{equation*}
    \bmu \longleftrightarrow \prod_\mathbf m w_{\mathbf m}^{J_{\bmu}(\mathbf m)} \longmapsto z^{\EMD_d^s(\bmu)}t^s.
\end{equation*}
Therefore the generating function $H_\mathbf n$ is just the image of the formal sum $H^*_\mathbf n$ of all monomials of the form \eqref{monomial}, under the substitution \eqref{subst}; as $s$ ranges over all nonnegative integers, there is one monomial in $H^*_\mathbf n$ for each possible $\bmu \in \mathcal C(s,n_1)\times \cdots \times \mathcal C(s,n_d)$. 

Since $\mathbf m \preceq \mathbf n$ for all $\mathbf m$, every array $J_{\bmu}$ is allowed to contain $\mathbf n$ in its support without violating the chain condition.  This means that every monomial in $H^*_\mathbf n$ is allowed to contain the variable $w_\mathbf n$, and so we may factor out the sum of all possible powers of $w_\mathbf n$, rewriting as
\begin{equation*}
    H^*_\mathbf n= \sum_{\bmu} \left(\prod_\mathbf m w_\mathbf m^{J_{\bmu}(\mathbf m)}\right) = \sum_{r=0}^\infty w^r_\mathbf n \cdot f(\mathbf w_{\mathbf m \neq \mathbf n}) = \frac{f(\mathbf w_{\mathbf m \neq \mathbf n})}{1-w_\mathbf n},
\end{equation*}
where $f$ is an infinite formal sum of monomials in the variables $w_\mathbf m$ where $\mathbf m \neq \mathbf n$.  Now we focus on rewriting this numerator $f$.  Suppose we subtract 1 from exactly one of the coordinates of $\mathbf n$; the possible results are $\mathbf n-\mathbf e(i)$ for $i=1,\dots,d$.  Now, on one hand, any monomial in $f$ containing the variable $w_{\mathbf n-\mathbf e(i)}$ appears in $H^*_{\mathbf n-\mathbf e(i)}$.  But on the other hand, note that all of these $\mathbf n-\mathbf e(i)$ are mutually incomparable under the product order, and so at most \emph{one} of them can be in the support of some $J_{\bmu}$, because of the chain condition.  Therefore any monomial in $f$ contains at most \emph{one} of the variables $w_{\mathbf n-\mathbf e(i)}$.  But the sum $\sum_{i=1}^d H^*_{\mathbf n-\mathbf e(i)}$ still overcounts the monomials appearing in $f$, since the same monomial may appear in several distinct summands.  

In other words, we want $f$ to be the formal sum of the \emph{union} (without multiplicity) of the monomials which appear in the summands $H^*_{\mathbf n-\mathbf e(i)}$.  We can achieve this by using the inclusion-exclusion principle: subtract those monomials which appear in at least 2 of the summands, then add back the monomials which appear in at least 3 of the summands, then subtract those appearing in at least 4 summands, and so on, until we arrive at those monomials appearing in all $d$ of the summands.  We can write this inclusion-exclusion as an alternating sum over nonempty subsets $A\subseteq [d]$, adding when $|A|$ is odd and sutracting when $|A|$ is even:
\begin{equation*}
    f=\sum_A (-1)^{|A|-1}\cdot H^*_{\mathbf n - \mathbf e(A).}
\end{equation*}
Finally, applying the substitution \eqref{subst}, we obtain
\begin{align*}
    H_\mathbf n = H^*_\mathbf n \Big|_{w_\mathbf m = z^{C(\mathbf m)}t}&= \frac{\sum_A (-1)^{|A|-1}\cdot H^*_{\mathbf n - \mathbf e(A)}}{1-w_\mathbf n} \Bigg |_{w_\mathbf m = z^{C(\mathbf m)}t}\\
    &=\frac{\sum_A(-1)^{|A|-1}\cdot H_{\mathbf n - \mathbf e(A)}}{1-z^{C(\mathbf n)}t,}
\end{align*}
proving the recursion.

As for the base case $H_{(1,\dots,1)}=\frac{1}{1-t}$, there is only one element in $\mathcal C(s,1)$, and so since every $n_i=1$, the inside sum in \eqref{hdef} has only one term; moreover, this unique $\bmu$ is just $d$ copies of the same trivial composition of $s$ into $1$ part, meaning that $\EMD_d^s(\bmu)=0$.  Hence $H_{(1,\dots,1)}(z,t)=\sum_s z^0t^s=\sum_s t^s$, whose closed form is $\frac{1}{1-t}$.  Likewise, since $\mathcal C(s,0)$ is empty, we must have $H=0$ if any of the $n_i$ become $0$.
\end{proof}

\begin{example} We write out this recursive definition in a concrete case, where $d=3$ and $\mathbf n = (5,2,2)$.  It is easiest to order the terms of the numerator according to the size of the subset $A$.  First, for $|A|=1$, we add together all possible $H_{\mathbf n'}$, where $\mathbf n'$ equals $\mathbf n$ with exactly 1 coordinate decreased; then for $|A|=2$, we \emph{subtract} all possible $H_{\mathbf n''}$ where $\mathbf n''$ equals $\mathbf n$ with exactly 2 coordinates decreased; finally, for $|A|=3$, we \emph{add} the one possible $H_{\mathbf n'''}$ where $\mathbf n'''$ equals $\mathbf n$ with all 3 coordinates decreased.  As for the denominator, $C$ is the same cost function we defined in \eqref{cost}, meaning that $C(5,2,2)=5-2=3$.   Then the recursion for $H_\mathbf n$ looks like this:
\begin{equation*}
    H_{(5,2,2)}=\frac{H_{(4,2,2)}+H_{(5,1,2)}+H_{(5,2,1)}-H_{(4,1,2)}-H_{(4,2,1)}-H_{(5,1,1)}+H_{(4,1,1)}}{1-z^3 t}
\end{equation*}
\end{example}

Having seen an example, we now mention the important (and very well-studied) specialization that results from setting $z=1$.  In this case, the coefficient of $t^s$ in $H_\mathbf n(1,t)$ is simply the total number of $d$-tuples $\bmu$, which is $\prod_{i=1}^d|\mathcal C(s,n_i)|=\prod_{i=1}^d \binom{s+n_i-1}{n_i-1}$:
\begin{equation}
\label{hn1t}
    H_\mathbf n(1,t)=\sum_{s=0}^\infty \prod_{i=1}^d \binom{s+n_i-1}{n_i-1}t^s.
\end{equation}

It is shown in \cite{dillon} that the closed form of \eqref{hn1t} is, after adjusting the index to match our setup, and writing $|\mathbf n|:=n_1+\cdots +n_d$,
\begin{equation}
\label{hnclosed}
H_\mathbf n(1,t)=\frac{W(t)}{(1-t)^{|\mathbf n|-d+1}},
\end{equation}
where the numerator $W(t)$ is a polynomial whose coefficients are the ``Simon Newcomb" numbers.  (For more on this natural generalization of Eulerian numbers to multisets, see \cite{abramson}, \cite{dillon}, and \cite{morales}.) Specifically, denoting the coefficient of $t^i$ in $W_\mathbf n$ by the symbol $[t^i]W_\mathbf n$, and adopting the $A$-notation originally used in \cite{dillon}, we have
\begin{align*}
[t^i]W_\mathbf n &= A\big(\mathbf n-(1,\dots,1),i\big)\\
&:= \text{ \# permutations of the multiset }\{1^{n_1-1},\dots,d^{n_d-1}\} \text{ containing }i\text{ descents}\\
&=\sum_{j=0}^i (-1)^j\binom{|\mathbf n|-d+1}{j}\prod_{k=1}^d\binom{i-j+n_k-1}{n_k-1}.
\end{align*}
The degree of the polynomial $W_\mathbf n$ is shown in \cite{dillon} to be $$\sum_{i=1}^d(n_i-1)-\max\{n_1,\dots,n_d\}.$$  The combinatorial interpretation implies that the coefficients of $W_\mathbf n$ are positive; in the special case where $\mathbf n = (n^d)$, then $W_\mathbf n$ is also unimodal and palindromic.  (This can be shown from a combinatorial or ring-theoretic approach; for the latter, see \cite{morales}, or Chapter 5 of \cite{bruns} on Stanley-Reisner and Gorenstein rings.)

From the combinatorial description above of $[t^i]W_\mathbf n$, it follows that the evaluation $W_\mathbf n(1)$ equals the total number of permutations of the multiset $\{1^{n_1-1},\dots,d^{n_d-1}\}$:
\begin{equation}
    \label{w1}
     W_\mathbf n(1)=\frac{\left(\sum_{i=1}^d (n_i-1)\right)!}{\prod_{i=1}^d (n_i-1)!} =\frac{(|\mathbf n |-d)!}{\prod (n_i-1)!}
\end{equation}
(We will need this fact later.)  More geometrically, every permutation of the multiset $\{1^{n_1-1},\dots,d^{n_d-1}\}$ corresponds to a unique increasing lattice path in $\mathbb N^d$, beginning at $(1,\dots,1)$ and ending at $(n_1,\dots,n_d)$: reading left to right, each occurrence of $i$ in the permutation signifies adding the standard basis vector $\mathbf e_i$ to the current position in the path.  Therefore, $W(1)$ can be interpreted as the total number of increasing paths connecting opposite corners of an $n_1 \times \cdots \times n_d$ array --- in other words, the number of chains in $[n_1] \times \cdots \times [n_d]$ with maximal length.


\subsection{A partial derivative}

Next, in order to transfer the $\EMD$ values from the exponents of $z$ into coefficients, we take the partial derivative of $H_\mathbf n$ with respect to $z$.  Applying the quotient rule to our definition of $H_\mathbf n$ in Proposition \ref{hrecur}, we obtain the following, where the sum still ranges over nonempty subsets $A\subseteq [d]$:
\begin{equation*}
    \frac{\partial H_\mathbf n}{\partial z}=\frac{\displaystyle \left(1-z^{C(\mathbf n)} t\right) \left( \sum_A (-1)^{|A|-1}\cdot \frac{\partial H_{\mathbf n-\mathbf e(A)}}{\partial z}\right) + C(\mathbf n) \cdot z^{C(\mathbf n)-1}\cdot t \cdot \left(\sum_A (-1)^{|A|-1}\cdot H_{\mathbf n-\mathbf e(A)}\right)}{\left( 1 -z^{C(\mathbf n)}t\right)^2}
\end{equation*}
\normalsize Now that the exponents have been changed into coefficients of $z$, we can set $z=1$:
\begin{align}
    \label{h'}
H'_\mathbf n:=\frac{\partial H_\mathbf n}{\partial z}\Bigg |_{z=1} &= \sum_{s=0}^\infty \left( \sum_{\bmu\in \mathcal C(s,n_1)\times \cdots \times \mathcal C(s,n_d)}\EMD_d^s(\bmu)\right) t^s\nonumber\\
&=
\frac{\displaystyle (1-t) \left( \sum_A (-1)^{|A|-1}\cdot H'_{\mathbf n-\mathbf e(A)}\right) + t \cdot C(\mathbf n) \left(\sum_A (-1)^{|A|-1}\cdot H_{\mathbf n-\mathbf e(A)}\right)}{( 1 -t)^2}
\end{align}
At this point, $z$ has played out its role, and so from now on we will write $H_\mathbf n$ in place of $H_\mathbf n(1,t)$.

Note that the coefficient of $t^s$ in $H'_\mathbf n$ is the sum of the values $\EMD_d^s(\bmu)$ for all valid $d$-tuples $\bmu$.  This means that our goal is now in sight: to find the expected value of $\EMD_d^s$ for fixed $\mathbf n$, we need to divide the sum of all possible $\EMD_d^s$ values (i.e., the coefficient of $t^s$ in $H'_\mathbf n$) by the total number of possible inputs $\bmu$ (i.e., the coefficient of $t^s$ in $H_\mathbf n$).  Therefore, once we find a way to simplify \eqref{h'}, we will be able to compute the result
\begin{equation}
\label{expected}
    \mathbb E(\EMD_d^s)=\frac{[t^s]H'_\mathbf n}{[t^s]H_\mathbf n} = \frac{[t^s]H'_\mathbf n}{\prod_{i=1}^d \binom{s+n_i-1}{n_i-1}},
\end{equation}
where $[t^s]$ denotes the coefficient of $t^s$ in a series.

In order to make the expression \eqref{h'} for $H'_\mathbf n$ more tractable to program, we will now focus only on the numerators of $H_\mathbf n$ and $H'_\mathbf n$.    We have already determined $W_\mathbf n(t)$, the numerator for $H_\mathbf n$, in the previous subsection.  We will let $N(t)$ denote the numerator of $H_\mathbf n'$.  By using software and observing patterns for small $\mathbf n$, we anticipate that the denominator of $H_\mathbf n'$ has exponent $|\mathbf n|-d+2$, and so we now set both
\begin{equation}
\label{hn}
W_\mathbf n:=(1-t)^{|\mathbf n |-d+1}H_\mathbf n \hspace{1cm} \text{and} \hspace{1cm} N_\mathbf n(t):= (1-t)^{|\mathbf n|-d+2}H'_\mathbf n.
\end{equation}
Therefore, we can clear denominators in \eqref{h'} by multiplying both sides by $(1-t)^{|\mathbf n|-d+2}$.  Proceeding carefully and clearing the remaining denominators using \eqref{hn}, the pattern becomes clear:
\begin{align}
    N_\mathbf n&=\sum_A (-1)^{|A|-1} (1-t)^{|A|-1} N_{\mathbf n-\mathbf e(A)}+t \cdot C(\mathbf n) \cdot (1-t)^{|\mathbf n|-d}\cdot (1-t)\cdot H_\mathbf n \nonumber\\
    &=\sum_A (t-1)^{|A|-1} N_{\mathbf n-\mathbf e(A)}+t \cdot C(\mathbf n) \cdot W_\mathbf n \label{n}
\end{align}
This provides us with a quick recursive code to obtain $N_\mathbf n$, after which we need only divide by $(1-t)^{|\mathbf n|-d+2}$ to recover $H'_\mathbf n$.  The rest is just a matter of extracting coefficients in order to apply the result \eqref{expected}.


\subsection{Expected value for continuous version of $\EMD_d$}

Now that we have a way to determine the expected value for the discrete EMD, we aim to find a formula for the expected value in the continuous setting.  

Starting with the expected value from \eqref{expected}, we scale by $1/s$ to normalize, and then let $s$ grow asymptotically:
\begin{align*}
\mathcal E_\mathbf n :=\mathbb E(\EMD_d) &= \lim_{s\rightarrow \infty} \frac{1}{s} \cdot \mathbb E(\EMD_d^s)\\[1ex]
&= \lim_{s\rightarrow \infty} \frac{1}{s} \cdot \frac{[t^s]H'_\mathbf n}{\prod_{i=1}^d \binom{s+n_i-1}{n_i-1}.}
\end{align*}
First we focus on the $[t^s]H'_\mathbf n$ part, namely the coefficient of $t^s$ in $H'_\mathbf n=\frac{N_\mathbf n(t)}{(1-t)^{|\mathbf n|-d+2}}$.  Now, the coefficient of $t^s$ in the series $\frac{1}{(1-t)^{|\mathbf n |-d+2}}$ is just \begin{equation*} 
\binom{s+|\mathbf n|-d+1}{|\mathbf n|-d+1}= \frac{s^{|\mathbf n|-d+1}}{(|\mathbf n |-d+1)!}+ \text{ lower-order terms in } s.
\end{equation*}  Meanwhile, $N_\mathbf n(t)$ is a polynomial, with some finite degree $b$.  Now, as $s\rightarrow \infty$, we have $s-b\rightarrow s$, and so the coefficient of $t^s$ in $H'_\mathbf n$ is asymptotic to $\frac{s^{|\mathbf n|-d+1}}{(|\mathbf n |-d+1)!}$ multiplied by the sum of the coefficients of $N_\mathbf n(t)$.  But this sum is just $N_\mathbf n(1)$, and so we have: $$
[t^s]H'_\mathbf n \sim N_\mathbf n(1) \cdot \frac{s^{|\mathbf n|-d+1}}{(|\mathbf n |-d+1)!}
$$
Accounting for the $1/s$, we currently have the following:
$$\mathcal E_\mathbf n=\lim_{s\rightarrow \infty}N_\mathbf n(1)\cdot \frac{s^{|\mathbf n|-d}}{(|\mathbf n|-d+1)! \prod_{i=1}^d \binom{s+n_i-1}{n_i-1}}$$
Now, since $\prod_i\binom{s+n_i-1}{n_i-1}\sim \prod_i\frac{s^{n_i-1}}{(n_i-1)!}=\frac{s^{|\mathbf n|-d}}{\prod_i(n_i-1)!}$, this becomes
\begin{equation}
\label{e_n penult}
\mathcal E_\mathbf n=N_\mathbf n(1)\cdot \frac{\prod_{i=1}^d (n_i-1)!}{(|\mathbf n|-d+1)!}
\end{equation}
But when we evaluate $N_\mathbf n(1)$ from equation \eqref{n}, the terms with $(t-1)$ all disappear; hence we need only consider subsets $A\subseteq [d]$ with one element, meaning we are now summing from $1$ to $d$:
$$
N_\mathbf n(1)=\sum_{i=1}^d N_{\mathbf n - \mathbf e(i)}(1)+C(\mathbf n)W_\mathbf n(1)
$$
Substituting for $W_\mathbf n(1)$ using \eqref{w1}, we have 
$$
N_\mathbf n(1)=\sum_{i=1}^d N_{\mathbf n - \mathbf e(i)}(1)+ \frac{C(\mathbf n)\cdot(|\mathbf n|-d)!}{\prod_{i=1}^d(n_i-1)!}
$$
Finally, returning to \eqref{e_n penult} and plugging this all in for $N_\mathbf n(1)$, we conclude with the recursive definition
\begin{align}
    \mathcal E_\mathbf n &=
    \left[\sum_{i=1}^d N_{\mathbf n - \mathbf e(i)}(1)+ \frac{C(\mathbf n)\cdot(|\mathbf n|-d)!}{\prod_{i=1}^d(n_i-1)!}\right]\cdot \frac{\prod_{i=1}^d (n_i-1)!}{(|\mathbf n|-d+1)!} \nonumber\\[2ex]
    &= \frac{
    \sum_{i=1}^d N_{\mathbf n - \mathbf e(i)}(1)\cdot 
    \frac{
    \prod_{i=1}^d(n_i-1)!}
    {(|\mathbf n|-d)!}}{|\mathbf n |-d+1}+\frac{C(\mathbf n)\cdot (|\mathbf n|-d)!}{(|\mathbf n|-d+1)!}\nonumber\\[2ex]
    &=\frac{\sum_{i=1}^d (n_i-1)\mathcal E_{\mathbf n-\mathbf e(i)}+C(\mathbf n)}{|\mathbf n |-d+1}, \label{expectfinal}
\end{align}
where $\mathcal E_{\mathbf n - \mathbf e(i)}=0$ if $\mathbf n-\mathbf e(i)$ contains a $0$. 

We record this as the main theorem of this paper, in the most useful case where $\mathbf n=(n^d)$:
\begin{theorem}
\label{expval}
The expected value of $\normalfont{\EMD}_d$ on $\mathcal P_n \times \cdots \times \mathcal P_n$ is $\mathcal E_{(n^d)}$ as defined in \eqref{expectfinal}.
\end{theorem}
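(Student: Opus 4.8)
The plan is to assemble the chain of reductions developed throughout this section into a single argument; the theorem is essentially the endpoint of that development, specialized to $\mathbf n = (n^d)$. First I would invoke Corollary \ref{chain} together with the generalized RSK bijection \eqref{genrsk}: for $d$-tuples of compositions the optimal chain-supported array $J_{\bmu}$ is uniquely determined, so the discrete $\EMD_d^s$ is given by the closed sum \eqref{defemddisc} with no minimization. This makes the two-variable generating function $H_\mathbf n(z,t)$ of \eqref{hdef} a faithful record of all discrete EMD values. Proposition \ref{hrecur} supplies its recursion: pushing each $\bmu$ to the monomial $\prod_\mathbf m w_\mathbf m^{J_{\bmu}(\mathbf m)}$, one factors out all powers of the $\preceq$-maximal variable $w_\mathbf n$, and identifies the numerator as the multiplicity-free union of the $H^*_{\mathbf n-\mathbf e(i)}$ — multiplicity-free because the chain condition forbids two of the pairwise-incomparable positions $\mathbf n - \mathbf e(i)$ from co-occurring in a support — which inclusion-exclusion expresses as the stated alternating sum over nonempty $A\subseteq[d]$; the substitution $w_\mathbf m\mapsto z^{C(\mathbf m)}t$ finishes it, with base case $H_{(1^d)}=1/(1-t)$.

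Next I would differentiate in $z$, set $z=1$, and note that $[t^s]H'_\mathbf n=\sum_{\bmu}\EMD_d^s(\bmu)$; clearing the denominator $(1-t)^{|\mathbf n|-d+2}$ yields the polynomial recursion \eqref{n} for the numerator $N_\mathbf n$, and dividing by the number of inputs $\prod_i\binom{s+n_i-1}{n_i-1}$ gives $\mathbb E(\EMD_d^s)$ as in \eqref{expected}. For the continuous statement I would scale by $1/s$ and let $s\to\infty$: since $[t^s](1-t)^{-(|\mathbf n|-d+2)}\sim s^{|\mathbf n|-d+1}/(|\mathbf n|-d+1)!$ and $N_\mathbf n$ has bounded degree, $[t^s]H'_\mathbf n\sim N_\mathbf n(1)\, s^{|\mathbf n|-d+1}/(|\mathbf n|-d+1)!$; combined with $\prod_i\binom{s+n_i-1}{n_i-1}\sim s^{|\mathbf n|-d}/\prod_i(n_i-1)!$, the powers of $s$ cancel, giving \eqref{e_n penult}. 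Evaluating $N_\mathbf n(1)$ in \eqref{n} annihilates every $(t-1)$ factor, so only singleton subsets $A$ survive; substituting $W_\mathbf n(1)=(|\mathbf n|-d)!/\prod_i(n_i-1)!$ from \eqref{w1} and simplifying produces the recursion \eqref{expectfinal}, and putting $\mathbf n=(n^d)$ is the theorem.

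The step I expect to be the real obstacle is justifying the claims about the denominator of $H'_\mathbf n$ that the text above reads off from computer experiments: that the exponent is exactly $|\mathbf n|-d+2$, equivalently that the polynomial $N_\mathbf n$ is not divisible by $1-t$. I would handle this by induction on $|\mathbf n|$: first, that $H'_\mathbf n=N_\mathbf n/(1-t)^{|\mathbf n|-d+2}$ with $N_\mathbf n$ a polynomial, by substituting the recursion \eqref{n} into \eqref{h'} and checking the two sides agree, starting from $N_{(1^d)}=0$; then that $N_\mathbf n(1)>0$ for $\mathbf n\neq(1^d)$, using $N_\mathbf n(1)=\sum_i N_{\mathbf n-\mathbf e(i)}(1)+C(\mathbf n)W_\mathbf n(1)$ together with $W_\mathbf n(1)>0$, the fact that $C(\mathbf n)>0$ whenever the coordinates of $\mathbf n$ are not all equal, and the inductive nonnegativity of the $N_{\mathbf n-\mathbf e(i)}(1)$ (when all coordinates of $\mathbf n$ are equal and exceed $1$, one instead uses that $C$ is positive at each $\mathbf n-\mathbf e(i)$). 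This positivity of $N_\mathbf n(1)$ is precisely what guarantees that the leading power of $s$ survives in the asymptotics, so that $\mathcal E_\mathbf n$ is the honest expected value rather than an artifact of an over-large denominator. A secondary, purely elementary point is that since $N_\mathbf n$ is a fixed polynomial of some degree $b$, the difference between $[t^{s-b}]$ and $[t^s]$ of $(1-t)^{-k}$ is a lower-order correction as $s\to\infty$, which is what licenses reading off $N_\mathbf n(1)$ in the limit.
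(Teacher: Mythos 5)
Your proposal is correct and follows essentially the same route as the paper: the theorem is proved there exactly as the cumulative output of Proposition \ref{hrecur}, the differentiation and specialization at $z=1$, the numerator recursion \eqref{n}, and the $s\to\infty$ asymptotics leading to \eqref{expectfinal}. In fact your inductive argument that the denominator exponent of $H'_{\mathbf n}$ is exactly $|\mathbf n|-d+2$ (via $N_{\mathbf n}(1)>0$) supplies a justification the paper only supports by computational observation, so on that one point your write-up is more complete than the original.
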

\begin{remark}
Recall from Proposition \ref{halfsum} the special relationship between $\EMD_3$ and $\EMD_2$, namely, $\EMD_3$ equals half the sum of the three pairwise $\EMD_2$ values.  This leads us to anticipate that
\begin{align*}
    \mathcal E_{(n^3)}=\mathbb E(\EMD_3) &= \mathbb E\left(\frac{1}{2}(\EMD_2+\EMD_2+\EMD_2)\right)\\
    &=\mathbb E\left( \frac{3}{2}\EMD_2\right)\\
    &=\frac{3}{2}\mathbb E(\EMD_2)\\[1ex]
    &=\frac{3}{2}\mathcal E_{(n^2).}
\end{align*}
We confirm this in Mathematica:

\begin{center} 
\begin{tabular}{c||c|c|c}
  $n$&$\mathcal E_{(n^2)}$   &  $\mathcal E_{(n^3)}$ & $\mathcal E_{(n^3)}/\mathcal E_{(n^2)}$\\ \hline \hline
   2&0.3333&0.5000&1.5\\
   3&0.5333&0.8000&1.5\\
   4&0.6857&1.0286&1.5\\
   5&0.8127&1.2191&1.5\\
   6&0.9235&1.3853&1.5\\
   7&1.0230&1.5345&1.5\\
   8&1.1139&1.6709&1.5\\
   9&1.1982&1.7972&1.5\\
   10&1.2770&1.9155&1.5
\end{tabular}
\end{center}
\end{remark}

\subsection{Unit normalized EMD} It is often convenient to \textit{unit normalize} the $\EMD_d$ so that its value falls between 0 and 1.  To this end, we claim that for a given $n$, the maximum value of $\EMD_d$ is $\lfloor d/2 \rfloor (n-1)$.  To see this, observe that the maximum value of the discrete $\EMD^s_d(\bmu)$ occurs when in every column of the matrix $M_{\bmu}$ (given by the RSK correspondence), half the entries are $1$'s and the other half are $n$'s; if $d$ is odd, then ``half" means $\lfloor d/2 \rfloor$, with the leftover entry being irrelevant by Proposition \ref{costalt}.  For such a $\bmu$, then, $\EMD^s_d(\bmu)$ equals the cost $\lfloor d/2 \rfloor (n-1)$ multiplied by $s$ (the number of columns).  After dividing by $s$ to pass to the continuous setting, we see that the maximum value of $\EMD_d$ is $\lfloor d/2 \rfloor (n-1)$, as claimed.  Therefore we present definitions for the \textbf{unit normalized} $\EMD_d$ and its expected value:
\begin{equation}
\label{unitnorm}
    \widehat{\EMD}_d(\bmu):=\frac{\EMD_d(\bmu)}{\lfloor d/2 \rfloor (n-1)} \hspace{1cm} \text{and} \hspace{1cm}     \widehat{\mathcal E}_{(n^d)}:=\frac{\mathcal E_{(n^d)}}{\lfloor d/2 \rfloor (n-1).}
\end{equation}

We observe a curious phenomenon when we fix $n$ and let $d$ increase: the unit normalized expected value alternately increases ($d$ changing from even to odd) and decreases ($d$ changing from odd to even), as seen in this example for $n=3$:

\bigskip
\begin{center}
{\renewcommand{\arraystretch}{1.5} 
\begin{tabular}{|c|c|c|c|c|c|c|c|c|c|}\hline
$d$ & 2 & 3 & 4 & 5 & 6 & 7 & 8 & 9 & 10\\ \hline
$\widehat{\mathcal E}$ & 0.2667 & 0.4000 & 0.3175 & 0.3968 & 0.3388 & 0.3952 & 0.3505 & 0.3943 & 0.3579 \\ \hline
\end{tabular}}
\end{center}
\bigskip

In some sense, then, an even number of distributions are more likely to be ``closer" together than an odd number of distributions.  The histograms of $\EMD^s_d$ values confirm this impression (see Figure \ref{fig:histograms}): when $d$ is even, the histograms are clearly right-skewed, whereas when $d$ is odd, the histograms have nearly zero skew (although still slightly right-skewed).  It seems that this alternating phenomenon is a consequence of taxicab geometry in even vs.\ odd dimensions: specifically, when $d$ is odd, the median of a vector's coordinates contributes nothing to its distance from the main diagonal (i.e., the cost function $C$).  Intuitively, when the other coordinates have a wide range --- that is, when $C$ is relatively large --- this ``free" coordinate can assume more values without affecting $C$, than when the range of the coordinates is smaller.  This increases the proportion of high-cost array positions compared to the case when $d$ is even.

\begin{figure}
\captionsetup[subfigure]{labelformat=empty}
    \hspace{7pt}\begin{subfigure}[b]{0.45\textwidth}
    \includegraphics[width=\textwidth]{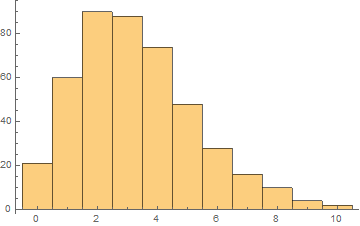}\caption{$d=2$}
    \end{subfigure}
    \begin{subfigure}[b]{0.45\textwidth}
    \includegraphics[width=\textwidth]{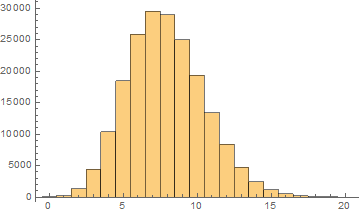}\caption{$d=4$}
    \end{subfigure}
    
    \begin{subfigure}[b]{0.45\textwidth}\includegraphics[width=\textwidth]{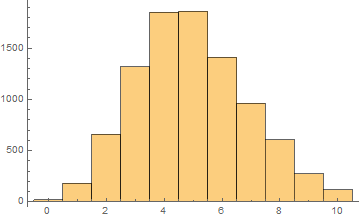}\caption{$d=3$}
    \end{subfigure}
    \begin{subfigure}[b]{0.45\textwidth}
    \includegraphics[width=\textwidth]{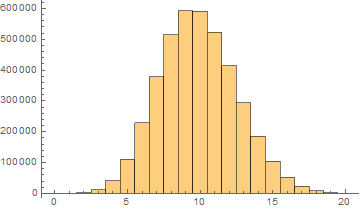}\caption{$d=5$}
    \end{subfigure}
    
    \caption{Histograms of discrete $\EMD_d^s$ values, fixing $s=5$ and $n=3$.  Note the more severe skew to the right when $d$ is even, compared to nearly zero skew when $d$ is odd.}
    \label{fig:histograms}
    
\end{figure}

\section{Real-world data} 
\label{s:realworld}

As a basic example, we now apply this generalized discrete EMD in order to compare the grade distributions in the spring vs.\ fall semester of 2019, for the course MATH 232 (Calculus II) at the University of Wisconsin-Milwaukee.  This data is contained in the Section Attrition and Grade Report, published by the Office of Assessment and Institutional Research at UWM; final letter grades are A, B, C, D, and F, so $n=5$.  We consider all the sections of MATH 232 with more than 20 students enrolled; there were seven such sections in each semester in 2019, and so we put $d=7$ in both cases.  We list the grade distributions below, with $\bmu$ corresponding to the spring sections and $\bm{\nu}$ the fall sections.  (We will continue to use the term ``distribution" rather than ``composition" in the context of this application.)  

The value of $s$ is problematic, since different classes in the real world do not have the same number of students.  One solution is to scale each distribution so that all distributions share a common $s$-value, namely, the least common multiple of their individual $s$-values.  On one hand, this method retains whole-number distributions whose grade proportions remain unchanged; but on the other hand, with seven classes of roughly 30 students each, that least common multiple could easily be in the billions, so this approach is computationally impractical.  Instead, we have chosen to scale each distribution so that the sum of its components equals the maximum of the seven original $s$-values; this of course produces non-integer distributions, and so in order to use Theorem \ref{rskemd}, we then round each component up or down, while respecting the original proportions as much as possible, until all distributions share the same $s$-value.  In our case, the common $s$-value for the $\mu_i$ is 31, and the common $s$-value for the $\nu_i$ is 33.

We list the resulting distributions, with the letter grades in descending order from A to F:
\bigskip
\begin{center}
\begin{tabular}{|ll|l|}\hline
   \textbf{Spring 2019}  && \textbf{Fall 2019} \\ \hline
   $\mu_1=(7,6,5,7,6)$  && $\nu_1=(2,5,11,9,6)$ \\
   $\mu_2=(12,6,8,3,2)$ && $\nu_2=(4,8,11,3,7)$ \\
   $\mu_3=(4,6,6,10,5)$ && $\nu_3=(4,7,12,7,3)$ \\
   $\mu_4=(6,5,10,6,4)$ && $\nu_4=(6,10,9,6,4)$ \\
   $\mu_5=(6,9,8,6,2)$ && $\nu_5=(4,16,9,1,3)$ \\
   $\mu_6=(6,7,8,5,5)$ && $\nu_6=(5,6,10,3,9)$ \\
   $\mu_7=(8,6,5,8,4)$ && $\nu_7=(5,8,9,7,4)$ \\\hline
   
\end{tabular}
\end{center}
\bigskip
Converting Theorem \ref{rskemd} into Mathematica code, we quickly compute (in 0.03 seconds each):
\begin{equation*}
    \EMD_7^{31}(\bmu)=49 \hspace{1cm} \text{and} \hspace{1cm} \EMD_7^{33}(\bm{\nu})=56.
\end{equation*}
Dividing by $s$ in each case to normalize (so that $\bmu$ and $\bm{\nu}$ actually mean $\frac{1}{s}\bmu$ and $\frac{1}{s}\bm{\nu}$ now),
\begin{equation*}
    \EMD_7(\bmu)=1.58065 \hspace{1cm} \text{and} \hspace{1cm} \EMD_7(\bm{\nu})=1.69697,
\end{equation*}
and finally dividing by $\lfloor d/2 \rfloor (n-1)=3 \cdot 4 =12$ in order to unit normalize, as in \eqref{unitnorm}, we obtain
\begin{equation*}
    \widehat{\EMD}_7(\bmu)=\bm{0.131721} \hspace{1cm} \text{and} \hspace{1cm} \widehat{\EMD}_7(\bm{\nu})=\bm{0.141414}.
\end{equation*}
To compare these results to the expected value, we code the recursive definition in Theorem \ref{expval}; then upon unit normalizing as in \eqref{unitnorm}, we find that
\begin{equation*}
    \widehat{\mathcal E}_{(5^7)}=\bm{0.298621}.
\end{equation*}

From this very limited data set, at least, it is clear that not only are the $\EMD$ values extremely consistent between the two semesters (within 0.01 of each other), but also they are significantly less than the expected value --- less than \textit{half} the expected value, in fact.  We should not be too surprised by this, of course, since college grades are (hopefully) not assigned at random.  It will be interesting to track the EMD for multiple courses in multiple semesters, with the goal of performing some informative cluster analysis on the results.

\section{Connections to algebraic geometry and representation theory}
\label{s:repthy}

In this section, we show that our generating function $H_\mathbf n$ from above is the Hilbert series of the Segre embedding from algebraic geometry.  In the $d=2$ case, this embedding is a determinantal variety, whose structure as an infinite-dimensional $\mathfrak{su}(p,q)$-module we can describe in the context of the EMD.

\subsection{A determinantal variety}

In this subsection we let $d=2$, and we will write $p,q$ in place of the usual $\mathbf n=(n_1,n_2)$.  In this case, as indicated in \cite{bw}, the series $H_{p,q}:=H_{p,q}(1,t)$, from \eqref{hn1t}, is in fact the Hilbert series of the \textbf{determinantal variety}
\begin{equation*}
    \mathcal D_{p,q}^{\leq 1}:=\left\{ M \in M_{p,q}(\mathbb C) \mid \text{rank }M \leq 1 \right\}
\end{equation*}
consisting of complex $p\times q$ matrices with rank at most 1.  To see this, it suffices to show that given a nonnegative integer $s$, the number of elements in $\mathcal C(s,p)\times \mathcal C(s,q)$ equals the dimension of $\mathbb C[\mathcal D_{p,q}^{\leq 1}]^s$, the space of homogeneous degree-$s$ polynomial functions on $\mathcal D_{p,q}^{\leq 1}$.  
To this end, let $w_{ij}$ be the coordinate functions on a generic $p\times q$ matrix. Since all $2 \times 2$ minors vanish for any element of $\mathcal D_{p,q}^{\leq 1}$, we observe that
\begin{equation*}
    \mathbb C[\mathcal D_{p,q}^{\leq 1}]\simeq \mathbb C[\mathbf w_{ij}] / \mathcal I,
\end{equation*}
where $\mathcal I$ is the \textbf{determinantal ideal} generated by the quadratics of the form $w_{ij}w_{i'j'}-w_{ij'}w_{i'j}$ for $i<i'$ and $j<j'$. It follows that a basis for $\mathbb C[\mathcal D_{p,q}^{\leq 1}]^s$ is given by the set of monomials
\begin{equation*}
    \mathcal B = \left\{ \prod_{k=1}^s w_{i_k,j_k} \:\Bigg | \: \{(i_k,j_k) \} \text{ is a chain in }[p]\times [q] \right\}.
\end{equation*}
Now we have an obvious bijective correspondence between $\mathcal B$ and the set $\mathcal J^s_{p,q}$ (borrowing notation from \eqref{chainarrays} to denote $p\times q$ arrays with support in a chain):
\begin{equation}
\label{monom_to_matrices}
    \prod_{i,j} w_{ij}^{J_{ij}} \longleftrightarrow J \in \mathcal J^s_{p,q}
\end{equation}
But $\mathcal J^s_{p,q}$ is in bijective correspondence with $\mathcal C(s,p)\times \mathcal C(s,q)$, as is clear from a slight generalization of our RSK correspondence in \eqref{genrsk}.  This proves our claim that $H_{p,q}$ is the Hilbert series of $\mathcal D_{p,q}^{\leq 1}$.

\subsection{The Segre embedding}

We extend the previous result to $d>2$, and so as before we fix $\mathbf n = (n_1,\dots,n_d)$.  The specialization $H_\mathbf n:=H_\mathbf n(1,t)$ of our generating function from \eqref{hn1t} happens also to be the Hilbert series of the Segre embedding:
\begin{align*}
\mathbb P(\mathbb C^{n_1})\times \cdots \times \mathbb P(\mathbb C^{n_d})&\hookrightarrow \mathbb P(\mathbb C^{n_1}\otimes \dots \otimes \mathbb C^{n_d}), \\
\left(\left[v^{(1)}\right],\dots,\left[v^{(d)}\right]\right)&\mapsto \left[v^{(1)}\otimes \cdots \otimes v^{(d)}\right].
\end{align*}
(See \cite{harris} and \cite{morales}.)  That is, $H_\mathbf n$ is the Hilbert series of the simple tensors. (In the case $d=2$, the set of simple tensors in $\mathbb C^p \otimes \mathbb C^q$ can be identified with the determinantal variety $\mathcal D_{p,q}^{\leq 1}$, coinciding with the previous subsection.)

To sketch this generalization of the $d=2$ case, which we presented in detail above, we let $\mathbf m$ range over all multi-indices $(m_1,\dots,m_d) \in  [n_1]\times \cdots [n_d]$, as in the proof of Proposition \ref{hrecur}.  Now consider a simple tensor $v^{(1)}\otimes \cdots \otimes v^{(d)}$.  We can expand this tensor in the standard basis as 
\begin{equation*} 
\sum_{\mathbf m}\underbrace{\left( v^{(1)}_{m_1} \cdots v^{(d)}_{m_d}\right)}_{w_\mathbf m} \mathbf e_{m_1}\otimes \cdots \otimes \mathbf e_{m_d},
\end{equation*} 
where $v^{(k)}_{\ell}$ is the $\ell^\text{th}$ coordinate of the vector $v^{(k)}$; the $w_\mathbf m$ are coordinate functions.  For any two multi-indices $\mathbf m$ and $\mathbf m'$, we see that the quadratic $w_\mathbf m w_{\mathbf m'}$ is invariant under the exchange of indices component-wise between $\mathbf m$ and $\mathbf m'$.  Intuitively, then, we can again mod out by the determinantal ideal generated by all $2\times 2$ minors, just as we did in the $d=2$ case above.

The upshot is that a basis for the coordinate ring of the simple tensors is given by those monomials $w_{\mathbf m_1}\cdots w_{\mathbf m_s}$ such that the set of multi-indices $\{\mathbf m_i\}$ form a chain.  We therefore have the generalization of \eqref{monom_to_matrices}, so we conclude that $H_\mathbf n$ is the Hilbert series of the simple tensors.  (Technically, of course, the Segre embedding is contained in the \emph{projectivization} of the simple tensors, so we are sweeping a fair amount under the rug here.)

\subsection{Representation theory}

Returning to the $d=2$ case, the coordinate ring $\mathbb C[\mathcal D_{p,q}^{\leq 1}]$ is an infinite-dimensional representation of the Lie algebra $\mathfrak{su}(p,q)$, known as the first Wallach representation.  (See \cite{ew}.)  We will show how the action on this representation corresponds to manipulating the two compositions in our $\EMD^s_2$ setting.

Consider the polynomial ring $\mathbb C[\mathbf x,\mathbf y]:=\mathbb C[x_1,\dots,x_p,y_1,\dots,y_q]$.  On one hand, $\mathbb C[\mathbf x,\mathbf y]$ admits an action of $\GL_1(\mathbb C)$ (which is just the multiplicative group of nonzero complex numbers), via
\begin{equation*}
    (g\cdot f)(\mathbf x,\mathbf y) = f(g^{-1}\mathbf x, g\mathbf y)
\end{equation*}
for $g\in \GL_1(\mathbb C)$ and $f\in \mathbb C[\mathbf x,\mathbf y]$.  From now on let $G=\GL_1(\mathbb C)$.  Note that the invariants under the $G$-action are those polynomials in which the degree of each term is the same with respect to $\mathbf x$ as it is with respect to $\mathbf y$; in other words, $\mathbb C[\mathbf x,\mathbf y]^G$ is generated by the monomials $x_iy_j$.  (This is a special case of the First Fundamental Theorem of Invariant Theory; see \cite{gw}, Section 5.2.1.)  But since the kernel of the ring homomorphism $w_{ij} \mapsto x_iy_j$ is precisely the determinantal ideal $\mathcal I$, we have $\mathbb C[\mathbf x,\mathbf y]^G \simeq \mathbb C[\mathcal D_{p,q}^{\leq 1}]$.  (This is a special case of the Second Fundamental Theorem of Invariant Theory; see \cite{gw}, Lemma 5.2.4.)

As a result of \textit{Howe duality} in Type A --- the delicate details of which are expounded in \cite{howeremarks} and \cite{htw} --- the space $\mathbb C[\mathbf x,\mathbf y]$ is also a module under the action of the Lie algebra $\mathfrak{su}(p,q)$ by differential operators.  Upon complexification, this gives rise to an action by $\mathfrak{gl}_{p+q}(\mathbb C)$, which as a set is just the $(p+q)\times(p+q)$ complex matrices.  In particular, the invariant subring $\mathbb C[\mathbf x, \mathbf y]^G \simeq \mathbb C[\mathcal D_{p,q}^{\leq 1}]$ is the irreducible, infinite-dimensional $\mathfrak{gl}_{p+q}$-module with highest weight $(\underbrace{-1,\dots,-1,}_p 0,\dots,0)$.

This action of $\mathfrak{gl}_{p+q}$ is given by differential operators on $\mathbb C[\mathbf x,\mathbf y]$, of the following four forms (see \cite{gw}, Section 5.6):
\begin{enumerate}
\setlength\itemsep{5pt}
    \item $x_i \frac{\partial}{\partial x_j}$ (Euler operators; technically the action includes the extra term $+\delta_{ij}$);
    \item $y_i \frac{\partial}{\partial y_j}$ (Euler operators);
    \item $\frac{\partial^2}{\partial x_i \partial y_j}$ (``raising operators");
    \item $x_iy_j$ (``lowering operators").
\end{enumerate}
Note that all these operators preserve the difference between the degree with respect to $\mathbf x$ and the degree with respect to $\mathbf y$. Therefore the $\mathfrak{gl}_{p+q}$-action preserves $\mathbb C[\mathbf x, \mathbf y]^G$, which we observed is generated by the elements $x_iy_j$.  

This $\mathfrak{gl}_{p+q}$-action can be described in terms of our $\EMD_2$ setting in this paper.  First, observe that any degree-$s$ monic monomial in $\mathbb C[\mathbf x, \mathbf y]^G$ corresponds uniquely to an ordered pair of compositions $(\mu,\nu) \in \mathcal C(s,p)\times \mathcal C(s,q)$, via
\begin{equation*}
    (\mu,\nu) \longleftrightarrow \mathbf x^\mu \mathbf y^\nu := x_1^{\mu(1)} \cdots x_p^{\mu(p)} y_1^{\nu(1)} \cdots y_q^{\nu(q)}.
\end{equation*}
This is no surprise, of course, since this is just the RSK correspondence we used earlier in this subsection; written out in all of its guises, we have
\begin{equation*}
    (\mu,\nu) \longleftrightarrow \mathbf x^\mu \mathbf y^\nu \longleftrightarrow J_{(\mu,\nu)}\in \mathcal J^s_{p,q} \longleftrightarrow \prod_{i,j} w_{ij}^{J_{ij}} .
\end{equation*}

Now we can see how each type $(1)$-$(4)$ of differential operator has an interpretation in the $\EMD_2$ context.  Consider the monomial $\mathbf x^\mu \mathbf y^\nu$ as defined above.  Then, up to scaling by coefficients, we observe the following:
\begin{enumerate}
    \item The Euler operator $x_i \frac{\partial}{\partial x_j}$ corresponds to moving 1 unit in $\mu$, from bin $j$ to bin $i$, since the exponent of $x_j$ decreases by 1 and the exponent of $x_i$ increases by 1.
    \item The Euler operator $y_i \frac{\partial}{\partial y_j}$ corresponds to moving 1 unit in $\nu$, from bin $j$ to bin $i$.
    \item The raising operator $\frac{\partial^2}{\partial x_i \partial y_j}$ corresponds to removing 1 unit from each composition: from bin $i$ in $\mu$ and from bin $j$ in $\nu$.
    \item The lowering operator $x_jy_i$ corresponds to adding 1 unit to each composition: to bin $i$ in $\mu$ and to bin $j$ in $\nu$.
\end{enumerate}
It will be interesting to study further whether this connection to representation theory might be exploited in existing applications of $\EMD_2$.

\section{Proof of Proposition \ref{monge}}
\label{s:mongeproof}

The methods in this paper depended heavily upon the fact that we need consider only those arrays $J$ whose support is a chain.  This followed from the statement in Proposition \ref{monge} --- yet to be proved --- that our cost array $C$ has the Monge property.  Before proving this here, we state three useful lemmas, the first of which is proved in \cite{a&p} and \cite{park}:

\begin{lemma}
\label{submatrix}
An $n\times \cdots \times n$ array $A$ has the Monge property if and only if every two-dimensional plane of $A$ has the Monge property.
\end{lemma}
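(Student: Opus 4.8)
We would treat the two implications separately. The forward direction is essentially free: any two-dimensional plane of $A$ is the restriction of $A$ to a subset of $[n]^d$ of the form $\{\mathbf z : z_k = c_k \text{ for all } k \notin \{i,j\}\}$, and such a subset is closed under the componentwise operations $\min$ and $\max$; hence the Monge inequality for two points lying in a common plane is literally a special case of the Monge inequality for $A$, and no work is required.

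For the converse, assume every two-dimensional plane of $A$ is Monge. The plan is first to isolate the ``elementary'' instances of the inequality and then to bootstrap to the general case by telescoping. For the first step: given any $\mathbf z$ and any $i \neq j$ with $z_i, z_j < n$, the four points $\mathbf z$, $\mathbf z + \mathbf e_i$, $\mathbf z + \mathbf e_j$, $\mathbf z + \mathbf e_i + \mathbf e_j$ all lie in a single plane, whose Monge property reads
\begin{equation*}
A(\mathbf z + \mathbf e_i) + A(\mathbf z + \mathbf e_j) \;\geq\; A(\mathbf z) + A(\mathbf z + \mathbf e_i + \mathbf e_j);
\end{equation*}
call this inequality $(\star)$. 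Now fix arbitrary $\mathbf x,\mathbf y \in [n]^d$, write $\mathbf u := \min(\mathbf x,\mathbf y)$ and $\mathbf v := \max(\mathbf x,\mathbf y)$ componentwise, and set $S := \{k : x_k > y_k\}$ and $T := \{k : x_k < y_k\}$, which are disjoint. A direct check gives $\mathbf x - \mathbf u \geq \mathbf 0$ supported on $S$, and $\mathbf w := \mathbf v - \mathbf x = \mathbf y - \mathbf u \geq \mathbf 0$ supported on $T$. Substituting $\mathbf v = \mathbf x + \mathbf w$ and $\mathbf y = \mathbf u + \mathbf w$, the desired inequality $A(\mathbf x) + A(\mathbf y) \geq A(\mathbf u) + A(\mathbf v)$ becomes the ``diminishing returns'' statement $A(\mathbf x + \mathbf w) - A(\mathbf x) \leq A(\mathbf u + \mathbf w) - A(\mathbf u)$, in which $\mathbf u \leq \mathbf x$ and the gap $\mathbf x - \mathbf u$ is supported on $S$, disjoint from the support $T$ of $\mathbf w$.

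To finish I would telescope in two nested layers. Writing $\mathbf w = \mathbf e_{i_1} + \cdots + \mathbf e_{i_r}$ with each $i_\ell \in T$ and adding these vectors one at a time to both $\mathbf x$ and $\mathbf u$, the outer telescope reduces the claim to: $A(\mathbf a + \mathbf e_i) - A(\mathbf a) \leq A(\mathbf b + \mathbf e_i) - A(\mathbf b)$ whenever $\mathbf b \leq \mathbf a$, $i \in T$, and $\mathbf a - \mathbf b$ is supported on $S$ (the two running points differ by the fixed vector $\mathbf x - \mathbf u$ at every stage, so this hypothesis persists). For this single-vector claim, walk from $\mathbf b$ up to $\mathbf a$ by adding unit vectors $\mathbf e_j$ with $j \in S$; since $S \cap T = \emptyset$ we have $j \neq i$ at every step, and each step of the inner telescope is exactly an instance of $(\star)$ for the pair $\{i,j\}$. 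All intermediate points lie between $\mathbf u$ and $\mathbf v$, hence inside $[n]^d$, so every application of $(\star)$ is legitimate, and summing the inequalities yields the result. The one real subtlety is the bookkeeping of these telescopes, and in particular the use of the disjointness $S \cap T = \emptyset$: this is precisely what ensures that the coordinate $i$ being incremented is never one of the coordinates being ``walked along,'' so that we only ever invoke $(\star)$ and never a (false) one-coordinate convexity statement of the form $A(\mathbf z + 2\mathbf e_i) + A(\mathbf z) \leq 2A(\mathbf z + \mathbf e_i)$. If one prefers citing to computing, an alternative is to invoke the standard fact that the Monge property (equivalently, submodularity) of a real-valued function on a box in $\mathbb Z^d$ is a local condition, equivalent to the family $(\star)$; the lemma is then immediate, since ``every two-dimensional plane is Monge'' unpacks, via the $d=2$ case of that fact, to exactly the same family $(\star)$.
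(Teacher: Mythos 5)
Your argument is correct. Note, however, that the paper does not prove Lemma \ref{submatrix} at all: it is quoted from \cite{a&p} and \cite{park}, so there is no internal proof to compare against. What you have written is a complete, self-contained replacement for that citation, and it follows the standard local-to-global pattern from that literature. The forward direction is indeed immediate because each plane is closed under componentwise $\min$ and $\max$. For the converse, your reduction to the adjacent inequalities $(\star)$ is exactly the $d$-dimensional analogue of Lemma \ref{plusone} (which the paper also only cites, for $d=2$), so your proof in fact establishes both lemmas at once: ``every plane is Monge'' yields $(\star)$ via the $2\times 2$ subconfiguration $\mathbf z$, $\mathbf z+\mathbf e_i$, $\mathbf z+\mathbf e_j$, $\mathbf z+\mathbf e_i+\mathbf e_j$, and the double telescope lifts $(\star)$ to arbitrary $\mathbf x,\mathbf y$. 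I checked the bookkeeping: $\mathbf v-\mathbf x=\mathbf y-\mathbf u$ supported on $T$, the outer telescope preserving the gap $\mathbf x-\mathbf u$ supported on $S$, the inner telescope invoking $(\star)$ only for pairs $j\in S$, $i\in T$ with $j\neq i$, and all intermediate points staying in the box $[\mathbf u,\mathbf v]$ --- all of this is sound. Your closing remark is also the right one to make explicit: since the Monge inequality is vacuous when $\mathbf x$ and $\mathbf y$ are comparable, one never needs a single-coordinate convexity inequality $A(\mathbf z)+A(\mathbf z+2\mathbf e_i)\geq 2A(\mathbf z+\mathbf e_i)$, which would be false in general and is precisely what the disjointness $S\cap T=\emptyset$ lets you avoid. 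The only cost of your route relative to the paper's is length; the benefit is that the paper would no longer need to lean on \cite{a&p} and \cite{park} for this step.
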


To make this explicit, we choose any two distinct indices $i,j$ from $\{1,\dots,d\}$, and then fix the remaining $d-2$ coordinates at the values $\overline{m}_1,\dots,\overline{m}_{i-1},\overline{m}_{i+1},\dots, \overline{m}_{j-1}, \overline{m}_{j+1},\dots, \overline{m}_d\in[n]$.  Then we will write $\overline{\mathbf m}^{i,j}_{k,\ell}:=(\overline{ m}_1,\dots,\overline{m}_{i-1},k,\overline{m}_{i+1},\dots, \overline{m}_{j-1}, \ell, \overline{m}_{j+1},\dots, \overline{m}_d)$. In other words, $\overline{\mathbf m}^{i,j}_{k,\ell}$ is the vector in which the $i^\text{th}$ coordinate is $k$, the $j^\text{th}$ coordinate is $\ell$, and the remaining coordinates are the fixed values $\overline{m}_1,\dots,\overline{m}_d$.  Now we can naturally define the two-dimensional subarray $A^{i,j}$ in which 
\begin{equation}
\label{subarray}
    A^{i,j}(k,\ell):=A\left(\overline{\mathbf m}^{i,j}_{k,\ell}\right).
\end{equation}  Then Lemma \ref{submatrix} states that $A$ has the Monge property if and only if $A^{i,j}$ has the Monge property for every choice of distinct $i$ and $j$.

This reduction to the two-dimensional case is extremely useful because of the following characterization of two-dimensional Monge arrays, proved in \cite{park}:

\begin{lemma}
\label{plusone}
Let $A$ be an $n\times n$ array.  Then $A$ has the Monge property if and only if \begin{equation*}
    A(k,\ell)+A(k+1,\ell+1)\leq A(k+1,\ell)+A(k,\ell+1)
    \end{equation*} 
    for all $k,\ell\in[n-1]$.
\end{lemma}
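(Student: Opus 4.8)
The plan is to prove the two directions of the equivalence separately; the forward implication is immediate, and the reverse one rests on a discrete ``double telescoping'' identity for the mixed second differences of $A$.

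First I would dispatch the easy direction. If $A$ has the Monge property, fix $k,\ell\in[n-1]$ and apply the defining inequality to the pair $\mathbf x=(k,\ell+1)$ and $\mathbf y=(k+1,\ell)$. Taking componentwise minima and maxima yields $(k,\ell)$ and $(k+1,\ell+1)$ respectively, so the Monge inequality becomes precisely $A(k,\ell)+A(k+1,\ell+1)\le A(k,\ell+1)+A(k+1,\ell)$, which is the stated condition.

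For the converse, assume the ``adjacent'' inequality for all $k,\ell\in[n-1]$, and let $\mathbf x=(x_1,x_2)$ and $\mathbf y=(y_1,y_2)$ be arbitrary. Since swapping $\mathbf x$ and $\mathbf y$ fixes the componentwise min, the componentwise max, and the right-hand side $A(\mathbf x)+A(\mathbf y)$, I may assume $x_1\le y_1$. If also $x_2\le y_2$, then the componentwise min is $\mathbf x$ and the max is $\mathbf y$, so the Monge inequality holds with equality. The only substantive case is $x_1\le y_1$ and $x_2>y_2$: writing $a=x_1\le b=y_1$ and $c=y_2<d=x_2$, the min is $(a,c)$ and the max is $(b,d)$, so it remains to show
\[
A(a,c)+A(b,d)\le A(a,d)+A(b,c).
\]
To see this, set $\Delta(i,j):=A(i,j)+A(i+1,j+1)-A(i,j+1)-A(i+1,j)$, which is $\le 0$ for every $i,j\in[n-1]$ by hypothesis, and verify --- by summing over $i$ first and then over $j$, each inner sum telescoping --- the identity
\[
\sum_{i=a}^{b-1}\sum_{j=c}^{d-1}\Delta(i,j)=A(a,c)+A(b,d)-A(a,d)-A(b,c).
\]
Every term on the left is nonpositive, so the right side is $\le 0$, as needed (and when $a=b$ the empty sum correctly gives $0=0$).

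I do not expect any genuine difficulty here; the one place requiring care is the converse, where one must organize the case split around the sign of $x_2-y_2$, recognize the nontrivial case as a ``rectangle'' inequality on the four corners $(a,c),(a,d),(b,c),(b,d)$, and check the telescoping identity without mis-indexing the double sum. Everything else is bookkeeping about componentwise minima and maxima.
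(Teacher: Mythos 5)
Your proof is correct. Note, however, that the paper does not prove this lemma at all --- it is stated as a known result and cited to the literature (the reference \cite{park}) --- so there is no in-paper argument to compare against. Your two-direction argument is the standard proof of this standard fact: the forward direction by specializing the Monge inequality to $\mathbf x=(k,\ell+1)$, $\mathbf y=(k+1,\ell)$, and the converse by the double telescoping of the mixed second differences $\Delta(i,j)$ over the rectangle indexed by $a\le i\le b-1$, $c\le j\le d-1$. The symmetry reduction to $x_1\le y_1$, the trivial case $x_2\le y_2$, and the degenerate cases $a=b$ or $c=d$ are all handled correctly, so the write-up is complete as it stands.
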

In other words, choose a position $(k,\ell)$ and then consider the $2\times 2$ subarray consisting of $A(k,\ell)$ and its three neighbors to the east, south, and southeast.  The condition displayed in the lemma means that the sum of the upper-left and lower-right entries must never be greater than the sum of the lower-left and upper-right entries.

We will need one final lemma, specific to the cost function $C$ in this paper.  Recall from Proposition \ref{costalt} that if we let $\widetilde{\mathbf m}$ denote a vector $\mathbf m$ with its coordinates rearranged in ascending order, then \begin{equation*}
    C(\mathbf m)=-\widetilde m_1 - \cdots -\widetilde m_{\lfloor \frac{d+1}{2} \rfloor} + \widetilde m_{\lfloor \frac{d+1}{2} \rfloor+1} + \cdots +\widetilde m_d \hspace{1cm} (d \text{ even})
\end{equation*}
or
\begin{equation*}
    C(\mathbf m)=-\widetilde m_1 - \cdots -\widetilde m_{\lfloor \frac{d+1}{2} \rfloor-1} + \widetilde m_{\lfloor \frac{d+1}{2} \rfloor+1} + \cdots + \widetilde m_d \hspace{1cm} (d \text{ odd}).
\end{equation*}
The index $\lfloor \frac{d+1}{2} \rfloor$ gave a kind of ``median" of the coordinates in $\mathbf m$; from now on, however, we will work instead with $M:=\lfloor \frac{d+1}{2} \rfloor+1=\lceil \frac{d+2}{2}\rceil$.  Intuitively, this index $M$ gives the next-greatest coordinate after the ``median."  The picture is the following, where the vertical lines divide the coordinates into two equal sets (with one leftover coordinate in the middle if $d$ is odd:
\begin{align*}
    d \text{ even}: \hspace{1cm}\widetilde{\mathbf m}&=(\widetilde m_1, \dots, \phantom{\Bigg\lvert}\widetilde m_{M-1}, \Bigg\lvert \widetilde m_M, \dots, \widetilde m_d)\\
    d \text{ odd}: \hspace{1cm}\widetilde{\mathbf m}&=(\widetilde m_1, \dots,\Bigg\lvert \widetilde m_{M-1}, \Bigg\lvert \widetilde m_M, \dots, \widetilde m_d)
\end{align*}
With this indexing in mind, we state our final lemma, which records the effect on $C(\mathbf m)$ of adding $1$ to a single coordinate $m_i$.  Recall from earlier that $\mathbf e(i)$ denotes the vector whose coordinates are all $0$ except for a $1$ in the $i^\text{th}$ component.

\begin{lemma}
\label{incdecsame}
Adding $1$ to a single coordinate $m_i$ of $\mathbf m$ has one of three effects on $C(\mathbf m)$: it either increases by 1, decreases by 1, or remains the same.  The effect depends on the value of $m_i$ relative to the other coordinates of $\mathbf m$:

\begin{enumerate}
    \item \normalfont $C(\mathbf m +\mathbf e(i))=C(\mathbf m)+1$ if $m_i \geq \widetilde m_M$.
    \item $C(\mathbf m +\mathbf e(i))=C(\mathbf m)-1$ if:
    \begin{enumerate}
        \item $d$ is even and $m_i<\widetilde m_M$; or
        \item $d$ is odd and $m_i<\widetilde m_{M-1}$.
    \end{enumerate}
    \item $C(\mathbf m +\mathbf e(i))=C(\mathbf m)$ if $d$ is odd and $m_i=\widetilde m_{M-1} < \widetilde m_M$.
\end{enumerate}
\end{lemma}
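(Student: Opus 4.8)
The plan is to work directly from the closed-form expression for $C(\mathbf m)$ given in Proposition \ref{costalt} and restated just above the lemma, tracking how the sorted vector $\widetilde{\mathbf m}$ changes when $1$ is added to a single coordinate $m_i$. The key observation is that adding $1$ to $m_i$ can only move that coordinate past other coordinates whose value equals $m_i$ (before the increment) or equals $m_i + 1$ (after); in other words, the sorted order is disturbed only locally, among entries equal to $m_i$ or $m_i+1$. So I would first fix notation: let $p$ be the position of $m_i$ in the sorted vector $\widetilde{\mathbf m}$ (choosing, say, the rightmost slot among ties equal to $m_i$, so that after incrementing, the new coordinate $m_i+1$ lands in a well-defined slot $p'$, where $p' \geq p$). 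The whole argument then reduces to asking: on which side of the dividing line (the line before index $M$) do the positions $p$ and $p'$ fall?

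The main steps, in order. First I would establish the ``bookkeeping lemma'': if adding $1$ to $m_i$ causes its sorted position to change from $p$ to $p'$, then the multiset $\{\widetilde m_1, \dots, \widetilde m_d\}$ of the new vector agrees with the old one except that the old values $\widetilde m_{p+1}, \dots, \widetilde m_{p'}$ each shift down one index and a copy of $m_i+1$ is inserted at slot $p'$ — but since all of $\widetilde m_{p+1}, \dots, \widetilde m_{p'}$ equal $m_i+1$ (they are exactly the coordinates that got leapfrogged, and being between $m_i$ and $m_i+1$ in value forces them to equal $m_i+1$), the \emph{sorted} vector changes in only one slot: the old $\widetilde m_p = m_i$ becomes $m_i+1$, i.e. $\widetilde{\mathbf m + \mathbf e(i)} = \widetilde{\mathbf m} + \mathbf e(p')$ where $p'$ is the new position. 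Wait — more carefully, the net effect is that exactly one entry of the sorted vector goes up by $1$, and it is the entry in slot $p'$ (the new position). Second, I would plug this into the closed form. Since $C$ is a signed sum of the sorted entries with coefficient $-1$ on slots $1, \dots, M-1$ (for $d$ even) or $1, \dots, M-1$ (for $d$ odd, with slot $M-1$ being... no: for $d$ odd the middle slot $M-1$ has coefficient $0$), and coefficient $+1$ on slots $M, \dots, d$, the change in $C$ is exactly the coefficient attached to slot $p'$.

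The three cases then fall out by locating $p'$: if $m_i \geq \widetilde m_M$ (case 1), then after incrementing, the new value $m_i + 1 > \widetilde m_M$ still, so $p' \geq M$, giving coefficient $+1$ and hence $C$ increases by $1$. If $d$ is even and $m_i < \widetilde m_M$, or $d$ is odd and $m_i < \widetilde m_{M-1}$ (case 2), then $m_i$ sits strictly left of the dividing line and incrementing it by $1$ keeps it at position $\leq M-1$ (using that for $d$ even the coefficient on slot $M-1$ is $-1$, and for $d$ odd we need $p' \leq M-2$ so that the coefficient is $-1$, not $0$); so the coefficient is $-1$ and $C$ decreases by $1$. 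The remaining case ($d$ odd, $m_i = \widetilde m_{M-1} < \widetilde m_M$) is precisely when $p' = M-1$, the zero-coefficient middle slot, so $C$ is unchanged. The main obstacle I anticipate is the careful handling of ties — pinning down exactly which sorted slot the incremented coordinate occupies, and verifying that leapfrogged entries are forced to equal $m_i+1$ — together with the boundary checks in case 2 for $d$ odd, where one must rule out $p' = M-1$. Once the bookkeeping lemma is stated cleanly, the rest is a short case check against the sign pattern of the closed form.
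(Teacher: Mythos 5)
Your route is the same as the paper's: both arguments start from the closed form of Proposition \ref{costalt} (restated just before the lemma) and decide whether the incremented coordinate carries sign $+1$, $-1$, or $0$ in that signed sum before and after the increment; your ``bookkeeping lemma'' is just a more formal version of what the paper phrases as ``negative $m_i$ is replaced by negative $(m_i+1)$,'' and the case split is identical. So the plan is sound, but there is one concrete slip in the bookkeeping which, as literally written, breaks your case~2.

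After the increment, the sorted vector differs from $\widetilde{\mathbf m}$ in slot $p$ (the rightmost slot holding the value $m_i$), \emph{not} in slot $p'$: by your own observation the entries in slots $p+1,\dots,p'$ all equal $m_i+1$ both before and after, so entrywise nothing changes there, and the correct statement is that only the entry in slot $p$ goes up by $1$. Consequently the change in $C$ is the coefficient attached to slot $p$, not to $p'$. If you keep the rightmost-landing convention for $p'$, then your case-2 claim ``$p'\le M-1$'' is false exactly in the tie case $m_i+1=\widetilde m_M$: take $d=2$ (so $M=2$) and $\mathbf m=(1,2)$ with $i=1$; then $p=1$ but $p'=2=M$, whose coefficient is $+1$, even though $C$ drops from $1$ to $0$. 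The conclusion $\Delta C=-1$ is right, but the stated reason is not, and the analogous tie $m_i+1=\widetilde m_{M-1}$ causes the same problem for odd $d$. The repair is immediate: attach the coefficient to slot $p$ (then $m_i<\widetilde m_M$ forces $p\le M-1$, $m_i<\widetilde m_{M-1}$ forces $p\le M-2$, and in case~3 one gets $p=M-1$ exactly), or equivalently adopt the leftmost insertion convention so that $p'=p$. With that correction your proof is complete and coincides with the paper's.
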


\begin{proof} We prove each of the three cases; the reader may find it helpful to keep an eye on the two possible ``pictures" of $\widetilde{\mathbf m}$ displayed before this lemma, along with the two possible sums for $C(\mathbf m)$ displayed just before that.
\begin{enumerate}
    \item Assume $m_i \geq \widetilde m_M$.  Then $m_i+1>\widetilde m_M$, and so in the sum defining $C(\mathbf m)$, we must have positive $m_i$ replaced by positive $(m_i+1)$.  Hence $C(\mathbf m)$ has increased by 1.
    \item \begin{enumerate}
        \item Assume $d$ is even and $m_i < \widetilde m_M$.  Then $m_i+1 \leq \widetilde m_M$, and so in the sum defining $C(\mathbf m)$, we must have negative $m_i$ replaced by negative $(m_i+1)$.  Hence $C(\mathbf m)$ has decreased by 1.
        \item Assume $d$ is odd and $m_i < \widetilde m_{M-1}$.  Then $m_i+1 \leq \widetilde m_{M-1}$, and so we must have negative $m_i$ replaced by negative $(m_i+1)$.  Hence $C(\mathbf m)$ has decreased by 1.
    \end{enumerate}
    \item Assume $d$ is odd and $m_i=\widetilde m_{M-1}<\widetilde m_M$; note that $\widetilde m_{M-1}$ does not appear in the sum defining $C(\mathbf m)$.  Then $\widetilde m_{M-2}<m_i+1\leq \widetilde m_M$, and so $m_i+1$ still does not appear in the sum defining $C(\mathbf m+\mathbf e(i))$.  Hence $C(\mathbf m)$ remains unchanged.
\end{enumerate}
\end{proof}

We are now ready for the proof, in which we show that an arbitrary two-dimensional subarray of $C$ has the Monge property.

\begin{proof}[Proof of Proposition \ref{monge}]
Let $i,j$ be two distinct indices in $\{1, \dots, d\}$.  Fix the remaining coordinates $\overline m_1,\dots,\overline m_d$ as above, and let $C^{i,j}$ be the corresponding two-dimensional subarray of $C$ defined in \eqref{subarray}.  Now let $m_i, m_j \in [n-1]$.  By Lemmas \ref{submatrix} and \ref{plusone}, it will suffice to show that \begin{equation*}
    C^{i,j}(m_i,m_j)+C^{i,j}(m_i+1,m_j+1)\leq C^{i,j}(m_i+1,m_j)+C^{i,j}(m_i,m_j+1).
\end{equation*}
But this condition can be rewritten as the following, where we simply write $\overline{\mathbf m}$ for $\overline{\mathbf m}^{i,j}_{m_i,m_j}$:
\begin{equation}
\label{condition}
    C(\overline{\mathbf m})+C(\overline{\mathbf m}+\mathbf e(i)+\mathbf e(j))\leq C(\overline{\mathbf m}+\mathbf e(i)) + C(\overline{\mathbf m}+\mathbf e(j))
\end{equation}
To show that this condition holds true, we need to examine six possible cases, depending on whether adding 1 to $m_i$ and $m_j$ (independently) causes $C$ to increase, decrease, or remain the same:
\bigskip

\small \begin{tabular}{|c||c|c|c|}\hline
     & $C(\overline{\mathbf m}+\mathbf e(i))=C(\overline{\mathbf m})+1$ & $C(\overline{\mathbf m}+\mathbf e(i))=C(\overline{\mathbf m})-1$  & $C(\overline{\mathbf m}+\mathbf e(i))=C(\overline{\mathbf m})$ \\ \hline \hline
     $C(\overline{\mathbf m}+\mathbf e(j))=C(\overline{\mathbf m})+1$ & Case 1 &  &  \\ \hline
     $C(\overline{\mathbf m}+\mathbf e(j))=C(\overline{\mathbf m})-1$ & Case 2 & Case 4 &  \\ \hline
     $C(\overline{\mathbf m}+\mathbf e(j))=C(\overline{\mathbf m})$ & Case 3 & Case 5 & Case 6 \\
     \hline
\end{tabular}

\bigskip

\normalsize
In each case below, all simplifications are directly justified by the results in Lemma \ref{incdecsame}. 

\begin{itemize} 
\item \textbf{Case 1:} In this case, the right-hand side of \eqref{condition} is $2\cdot C(\overline{\mathbf m})+2$.  For the left-hand side, we know in general that $C\big(\overline{\mathbf m}+\mathbf e(i)+\mathbf e(j)\big)=C\big((\overline{\mathbf m}+\mathbf e(i))+\mathbf e(j)\big)$, which by Lemma \ref{incdecsame} can be no greater than $C(\overline{\mathbf m})+2$.  Hence the inequality in \eqref{condition} must hold.

\item \textbf{Case 2:} In this case, the right-hand side of \eqref{condition} is $2\cdot C(\overline{\mathbf m})$.  As for the second term on the left-hand side, by Lemma \ref{incdecsame}, we must have $m_i \geq \widetilde m_M$; meanwhile, $m_j$ is strictly less than either $\widetilde m_M$ (if $d$ is even) or $\widetilde m_{M-1}$ (if $d$ is odd), and so neither inequality is affected by adding 1 to $m_i$.  Therefore we have 
\begin{align*}
    C\big(\overline{\mathbf m}+\mathbf e(i)+\mathbf e(j)\big)&=C\big((\overline{\mathbf m}+\mathbf e(i))+\mathbf e(j)\big)\\
    &=C((\overline{\mathbf m}+\mathbf e(i))-1\\
    &= C(\overline{\mathbf m})+1-1\\
    &=C(\overline{\mathbf m}).
\end{align*}
Hence we have an equality in \eqref{condition}.

\item \textbf{Case 3:} Similar to Case 2, the two additions are independent of each other.  The right-hand side of \eqref{condition} is $2\cdot C(\overline{\mathbf m})+1$.  In this case, we must have $d$ odd; also, $m_i\geq \widetilde m_M$, along with $m_j=\widetilde m_{M-1}<\widetilde m_M$.  Then \begin{align*}
    C\big(\overline{\mathbf m}+\mathbf e(i)+\mathbf e(j)\big)&=C\big((\overline{\mathbf m}+\mathbf e(i))+\mathbf e(j)\big)\\
    &=C((\overline{\mathbf m}+\mathbf e(i))\\
    &= C(\overline{\mathbf m})+1.
\end{align*} 
Again we obtain an equality in \eqref{condition}.

\item \textbf{Case 4:} The right-hand side of \eqref{condition} is $2\cdot C(\overline{\mathbf m})-2$.  If $d$ is even, then both $m_i$ and $m_j$ are strictly less than $\widetilde m_M$, and if $d$ is odd, then both are strictly less than $\widetilde m_{M-1}$.  Either way, after adding 1 to $m_i$, the same inequality still holds for $m_j$, and so again we have
\begin{align*}
    C\big(\overline{\mathbf m}+\mathbf e(i)+\mathbf e(j)\big)&=C\big((\overline{\mathbf m}+\mathbf e(i))+\mathbf e(j)\big)\\
    &=C((\overline{\mathbf m}+\mathbf e(i))-1\\
    &= C(\overline{\mathbf m})-1-1\\
    &=C(\overline{\mathbf m})-2,
\end{align*}
and we get an equality in \eqref{condition}.

\item \textbf{Case 5:} The right-hand side of \eqref{condition} is $2\cdot C(\overline{\mathbf m})-1$.  In this case, $d$ must be odd, with $m_i<\widetilde m_{M-1} = m_j <\widetilde m_M$.  After adding 1 to $m_j$, we still have $m_i$ less than the $(M-1)^\text{th}$ component in the new rearranged vector, and so the effects of the two additions are independent.  We obtain
\begin{align*}
    C\big(\overline{\mathbf m}+\mathbf e(i)+\mathbf e(j)\big)&=C\big((\overline{\mathbf m}+\mathbf e(j))+\mathbf e(i)\big)\\
    &=C((\overline{\mathbf m}+\mathbf e(j))-1\\
    &= C(\overline{\mathbf m})-1
\end{align*}
and so we have an equality in \eqref{condition}.

\item \textbf{Case 6:} This is the slightly surprising case, in which the two additions are \emph{not} independent of each other.  The right-hand side of \eqref{condition} is $2\cdot C(\overline{\mathbf m})$, and we know that $d$ must be odd, with $m_i=m_j=\widetilde m_{M-1}<\widetilde m_M$.  After adding 1 to $m_i$, we obtain a vector $\mathbf m'$ in which $m'_j=m_j$ is now strictly less than $\widetilde m'_{M-1}$, and so \emph{now} adding 1 to $m_j$ results in an overall decrease by 1.  Hence we have
\begin{align*}
    C\big(\overline{\mathbf m}+\mathbf e(i)+\mathbf e(j)\big)&=C\big((\overline{\mathbf m}+\mathbf e(i))+\mathbf e(j)\big)\\
    &=C((\overline{\mathbf m}+\mathbf e(i))-1\\
    &= C(\overline{\mathbf m})-1.
\end{align*}
Hence the left-hand side of \eqref{condition} is less than the right-hand side, and the condition is still satisfied.
\end{itemize}

We have exhausted all possible cases, and so since \eqref{condition} holds in each of them, the two-dimensional array $C^{i,j}$ has the Monge property.  Since $i$ and $j$ were arbitrary, \emph{every} two-dimensional subarray of $C$ has the Monge property, and so by Lemma \ref{submatrix}, we conclude that $C$ itself has the Monge property.
\end{proof} 

\bibliographystyle{amsplain}

\bibliography{emdbib}

\providecommand{\bysame}{\leavevmode\hbox to3em{\hrulefill}\thinspace}
\providecommand{\MR}{\relax\ifhmode\unskip\space\fi MR }
\providecommand{\MRhref}[2]{%
  \href{http://www.ams.org/mathscinet-getitem?mr=#1}{#2}
}
\providecommand{\href}[2]{#2}
\begin{thebibliography}{10}

\bibitem{abramson}
M.~Abramson, \emph{A simple solution of {S}imon {N}ewcomb's problem}, J.
  Combin. Theory Ser. A \textbf{18} (1975), 223--225.

\bibitem{a&p}
A.~Aggarwal and J.K. Park, \emph{Sequential searching in multidimensional
  monotone arrays}, Research Report RC 15128, IBM T.J. Watson Research Center,
  Yorktown Heights, NY, 1989.

\bibitem{bein}
W.~Bein, P.~Brucker, J.~Park, and P.~Pathak, \emph{A {M}onge property for the
  d-dimensional transport problem}, Discrete Appl. Math. \textbf{58} (1995),
  no.~2, 97--109.

\bibitem{bw}
R.~Bourn and J.~Willenbring, \emph{Expected value of the one-dimensional earth
  mover's distance}, A. Stat. \textbf{11} (2020), no.~1, 53--78.

\bibitem{bruns}
W.~Bruns and J.~Herzog, \emph{Cohen-{M}acaulay rings}, Cambridge University
  Press, 1993.

\bibitem{caselli}
F.~Caselli, \emph{On the multivariate {R}obinson-{S}chensted correspondence},
  Bollettino dell'Unione Matematica Italiana \textbf{9} (2009), no.~1,
  591--602.

\bibitem{taxicab}
H.~\c{C}olakoğlu, \emph{On the distance formulae in the generalized taxicab
  geometry}, Turkish J. Math. \textbf{43} (2019), no.~3, 1578--1594.

\bibitem{dillon}
J.~Dillon and D.~Roselle, \emph{Simon {N}ewcomb's problem}, SIAM J. Appl. Math.
  \textbf{17} (1969), no.~6, 1086--1093.

\bibitem{ew}
T.~Enright and J.~Willenbring, \emph{Hilbert series, {H}owe duality, and
  branching for classical groups}, Ann. of Math. \textbf{159} (2004), no.~1,
  337--375.

\bibitem{volkmer}
A.~Frohmader and H.~Volkmer, \emph{1-{W}asserstein distance on the standard
  simplex}, {arXiv:1912.04945}, 2019.

\bibitem{fulton}
W.~Fulton, \emph{Young tableaux}, Cambridge University Press, 1997.

\bibitem{gw}
R.~Goodman and N.~Wallach, \emph{Symmetry, representations, and invariants},
  Springer, 2009.

\bibitem{harris}
J.~Harris, \emph{Algebraic geometry: a first course}, Springer-Verlag, 1995.

\bibitem{hoffman}
A.~Hoffman, \emph{On simple linear programming problems}, {Convexity:
  Proceedings of the Seventh Symposium in Pure Mathematics of the AMS}
  (V.~Klee, ed.), American Mathematical Society, Providence, RI, 1963,
  pp.~317--327.

\bibitem{howeremarks}
Roger Howe, \emph{Remarks on classical invariant theory}, Trans. Amer. Math.
  Soc. \textbf{313} (1989), no.~2, 539--570.

\bibitem{htw}
Roger Howe, Eng-Chye Tan, and Jeb~F. Willenbring, \emph{Stable branching rules
  for classical symmetric pairs}, Trans. Amer. Math. Soc. \textbf{357} (2005),
  no.~4, 1601--1626.

\bibitem{kline}
J.~Kline, \emph{Properties of the d-dimensional earth mover's problem},
  Discrete Appl. Math. \textbf{265} (2019), 128--141.

\bibitem{kretschmann}
J.~Kretschmann, \emph{Earth mover's distance between grade distribution data
  with fixed mean}, Master's thesis, University of Wisconsin-Milwaukee, 2020.

\bibitem{monge}
G.~Monge, \emph{M\'emoire sur la th\'eorie des d\'eblais et des remblais},
  Histoire de l'Acad\'emie Royale des Sciences de Paris, 1781, pp.~666--704.

\bibitem{morales}
M.~Morales, \emph{Segre embeddings, {H}ilbert series, and {N}ewcomb's problem},
  {HAL ID: hal-00839652}, 2013.

\bibitem{park}
J.~Park, \emph{The {M}onge array: an abstraction and its applications}, Ph.D.
  thesis, Massachusetts Institute of Technology, 1991.

\bibitem{villani}
C.~Villani, \emph{Optimal transport, old and new}, Springer, 2008.

\end{thebibliography}

\end{document}